\date{\today}
\title[Isometric immersions with singularities]{%
  Isometric immersions with singularities
  between space forms of the same positive curvature
}
\author[A. Honda]{Atsufumi Honda}
\address{%
   National Institute of Technology, Miyakonojo College, 
   Yoshio, Miyakonojo 885-8567, Japan
}
\email{atsufumi@cc.miyakonojo-nct.ac.jp}
\subjclass[2010]{%
Primary 53C42; 
Secondary 57R45. 
}
\keywords{%
 isometric immersion,
 wave front,
 (co-)orientability, 
 caustic, dual.
}
\theoremstyle{plain}
 \newtheorem{theorem}{Theorem}[section]
 \newtheorem{introtheorem}{Theorem}
 \newtheorem{proposition}[theorem]{Proposition}
 \newtheorem{fact}[theorem]{Fact}
 \newtheorem*{fact*}{Fact}
 \newtheorem{lemma}[theorem]{Lemma}
 \newtheorem{corollary}[theorem]{Corollary}
 \theoremstyle{remark}
 \newtheorem{definition}[theorem]{Definition}
 \newtheorem{remark}[theorem]{Remark}
 \newtheorem*{acknowledgements}{Acknowledgements}
 \newtheorem{example}[theorem]{Example}
\numberwithin{equation}{section}
\newcommand{\Z}{\boldsymbol{Z}}
\newcommand{\R}{\boldsymbol{R}}
\newcommand{\SO}{\operatorname{SO}}
\newcommand{\inner}[2]{\left\langle{#1},{#2}\right\rangle}
\newcommand{\vect}[1]{\boldsymbol{#1}}
\begin{document}
\begin{abstract}
  In this paper, we give a definition of 
  {\it coherent tangent bundles of space form type\/},
  which is a generalized notion of space forms.
  Then, we classify their realizations in the sphere as a wave front,
  which is a generalization of a theorem of O'Neill and Stiel:
  any isometric immersion of the $n$-sphere into the $(n+1)$-sphere 
  of the same sectional curvature is totally geodesic.
\end{abstract}
\maketitle


\section*{Introduction}
\label{intro}
Consider an isometric immersion of $\Sigma^n(c)$ into $\Sigma^{n+1}(c)$,
where $\Sigma^n(c)$ denotes a connected, simply connected 
$n$-dimensional space form of constant sectional curvature $c$.
Such an immersion must be 
\begin{itemize}
\item
a cylinder over a complete plane curve if $c=0$,
which was proved by Hartman-Nirenberg \cite{HN}.
(Massey \cite{Massey} gave an alternative proof for $n=2$.)
\item
totally geodesic if $c>0$,
which was proved by O'Neill-Stiel \cite{OS}.
\end{itemize}
Roughly speaking, if $c$ is non-negative, such an isometric immersion is trivial.
However, if $c$ is negative, 
there exist many nontrivial examples of such isometric immersions 
as shown in \cite{Nomizu,Ferus,AbeHaas}.
A complete classification was given by Abe-Mori-Takahashi \cite{AbeMoriTakahashi} 
(see also \cite{Honda}).

On the other hand, even if $c$ is non-negative,
there are many nontrivial examples of such isometric immersions
admitting some {\it singularities}
(cf.\ Figure \ref{fig:gallery}).
In the seminal paper of Murata-Umehara \cite{MU},
they classified complete {\it flat fronts},
which are regarded as a generalization of 
isometric immersions of $\R^2=\Sigma^2(0)$ into $\R^3=\Sigma^{3}(0)$
(cf.\ Section \ref{sec:transformation}).
Moreover, complete flat fronts were proved 
to have various global properties
(see Fact \ref{fact:MU} for the precise statement).
Hence, the following problem naturally arises:
{\it What occurs in the non-flat or higher dimensional cases?}

\begin{figure}[htb]
\begin{center}
 \begin{tabular}{{c@{\hspace{8mm}}c@{\hspace{6mm}}c}}
  \resizebox{2.8cm}{!}{\includegraphics{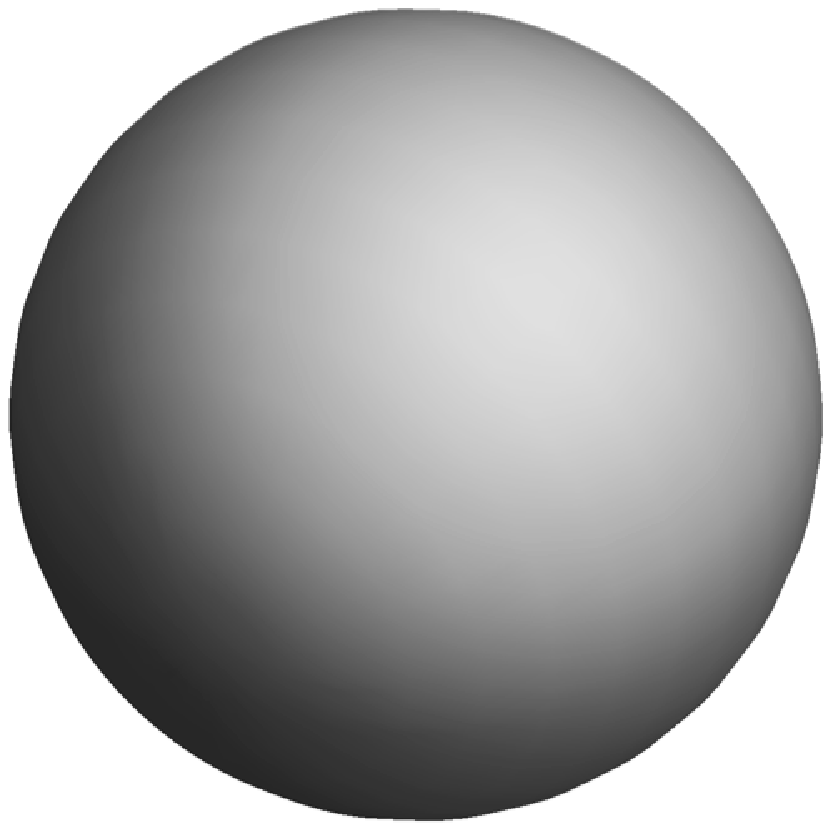}}&
  \resizebox{3cm}{!}{\includegraphics{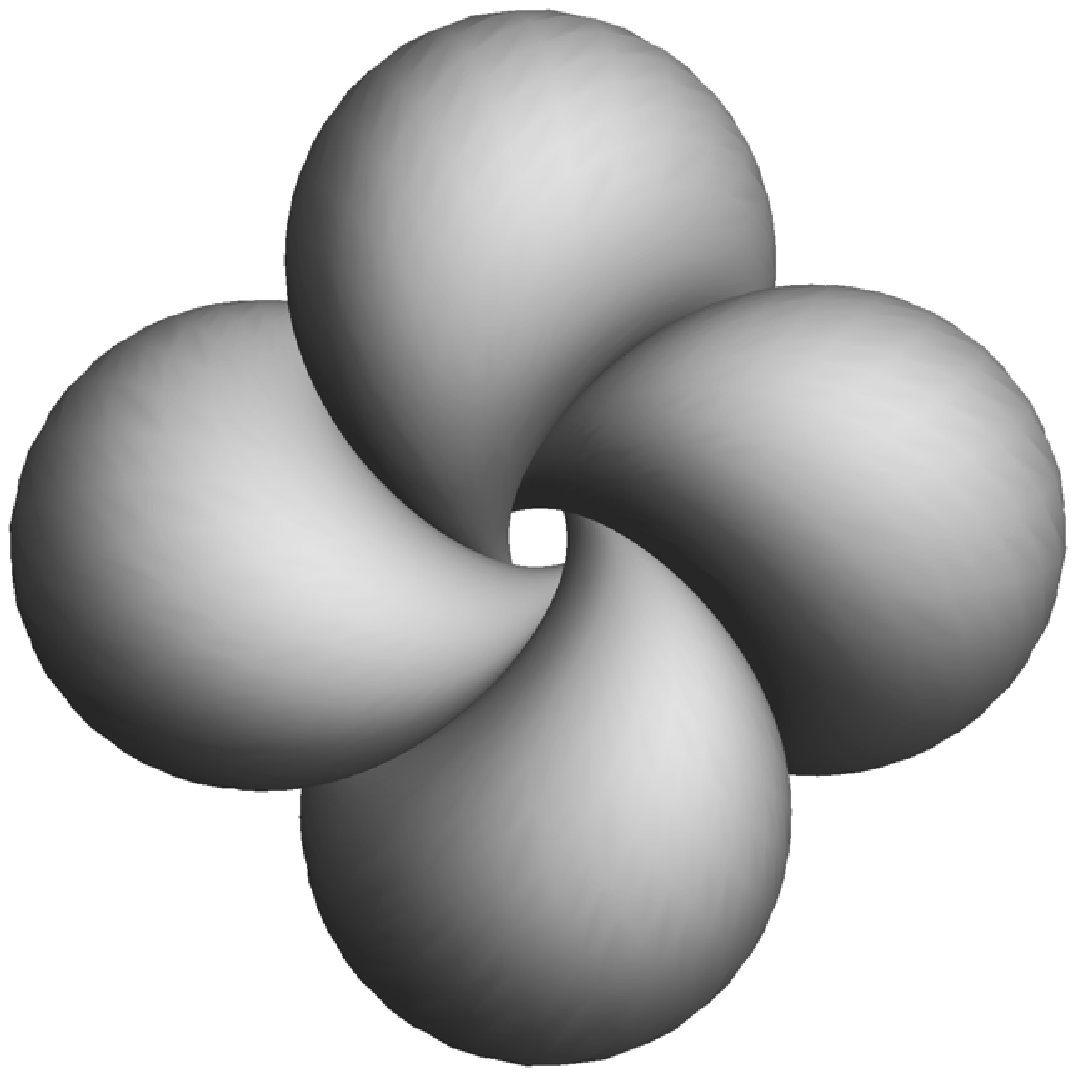}}&
  \resizebox{3cm}{!}{\includegraphics{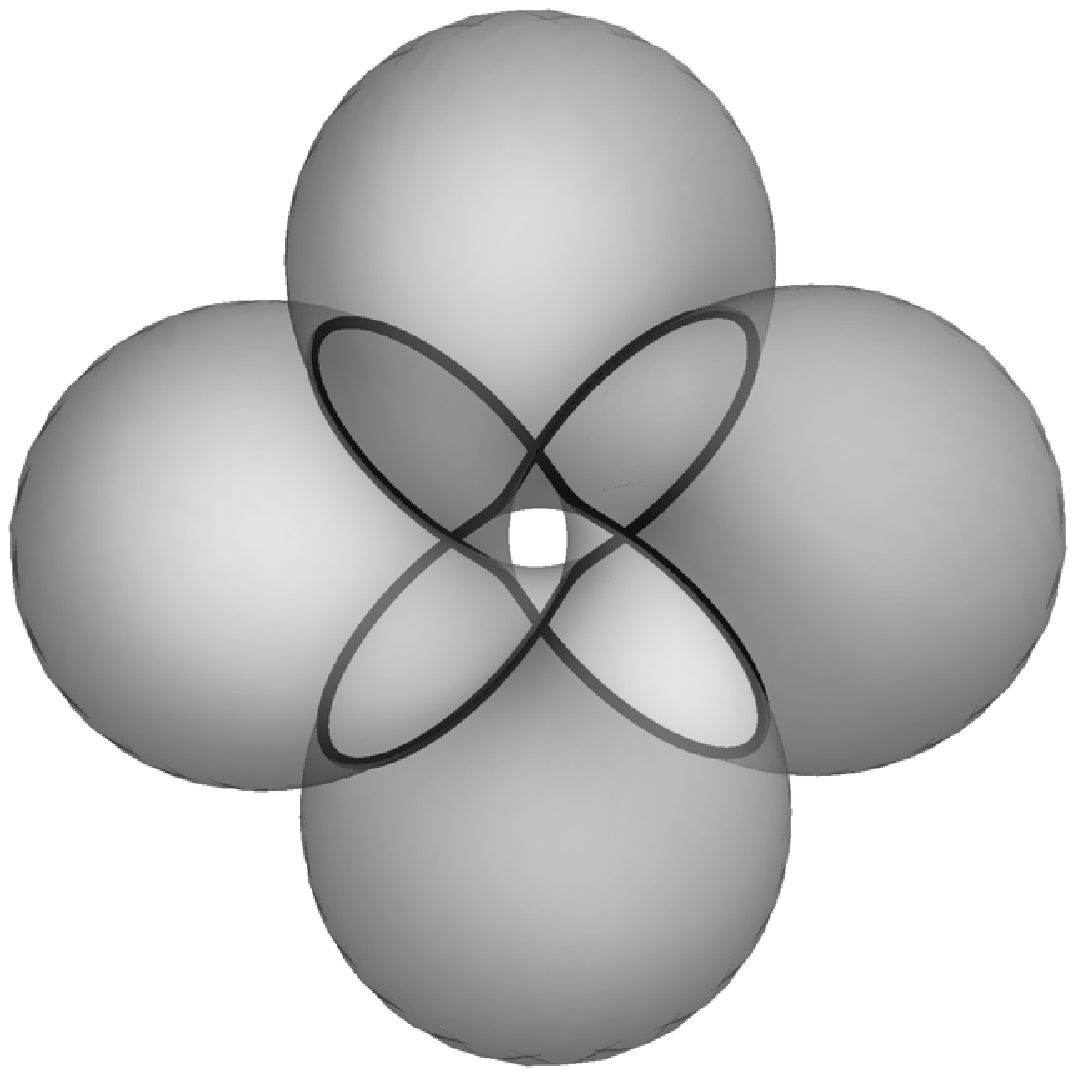}} \\
  {\footnotesize  (A) totally geodesic.} &
  {\footnotesize  (B) non-totally-geodesic.} &
  {\footnotesize  (B') the same, transparent.}
 \end{tabular}
 \caption{Images of isometric immersions of $S^2$ into $S^3$.
 Non-totally-geodesic ones have singularities
 (see Example \ref{ex:non-co-ori}).
 Throughout this paper, 
 we show graphics using the stereographic projection
 $\pi_{\rm st} : S^3\setminus\{(0,-1,0,0)\} \rightarrow \R^3$
 defined by
  $
    \pi_{\rm st}(x_1,x_2,x_3,x_4)
    =(x_1,x_3,x_4)/(1+x_2).
  $
 The bold curves are the image of the singular set.
 }
 \label{fig:gallery}
\end{center}
\end{figure}

In this paper, we investigate `generalized' isometric immersions
of $S^n(c)=\Sigma^n(c)$ into $S^{n+1}(c)=\Sigma^{n+1}(c)$,
where $S^{n+1}(c)=\Sigma^{n+1}(c)$ is a sphere
of positive sectional curvature $c>0$.
To clarify the settings, 
we first give a definition of {\it coherent tangent bundles of space form type},
which is a generalization of space forms (Definition \ref{def:CTB_spaceform}).
Recall that, in \cite{SUY0, SUY1}, Saji-Umehara-Yamada
introduced {\it coherent tangent bundles} 
which can be regarded as a generalized notion of 
Riemannian manifolds with singularities
(for the precise definitions, see Section \ref{sec:prelim}).
We remark that
there exist coherent tangent bundle structures of space form type on $S^{n}$
whose sectional curvature is non-positive (cf.\ Example \ref{ex:simp_conn_CTB}),
while the sectional curvature
of a simply connected compact space form must be positive.

A {\it wave front} (or a {\it front\/}, for short)
is a generalized notion of immersions admitting singularities.
Coherent tangent bundles can be considered as 
an intrinsic formulation of fronts, 
because fronts (more generally {\it frontals\/}) 
induce coherent tangent bundle structures on the source manifold
(cf.\ \cite{SUY1}, see also Example \ref{ex:induced_CTB}).

\

The main theorem of this paper is as follows.

\begin{introtheorem}\label{thm:main}
Let $M^n$ be a connected smooth $n$-manifold,
and $f : M^n \rightarrow S^{n+1}(c)$ a front.
If the induced coherent tangent bundle on $M^n$
is of space form type with positive sectional curvature $c$, 
then $f$ is either totally geodesic or 
an umbilic-free tube of radius $\pi/(2\sqrt{c})$ whose center curve is 
a closed regular curve in $S^{n+1}(c)$.
In particular, $M^n$ is diffeomorphic to 
either $S^n$ or $S^1\times S^{n-1}$,
hence orientable.
\end{introtheorem}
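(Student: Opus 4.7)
The plan is to imitate the classical O'Neill--Stiel argument on the regular locus of $f$, use the Gauss equation to force the shape operator to have rank at most one, and then globalise this local rank-one structure via the front picture to recover either the totally geodesic case or the tube.

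First, on the (open, dense) regular set $R(f)\subset M^n$, $f|_{R(f)}$ is a genuine isometric immersion of an $n$-dimensional space form of curvature $c$ into $S^{n+1}(c)$. Denoting by $A$ the shape operator with respect to the front's unit normal $\nu$, the Gauss equation
\[
R(X,Y)Z=c\bigl(\inner{Y}{Z}X-\inner{X}{Z}Y\bigr)+\inner{AY}{Z}AX-\inner{AX}{Z}AY
\]
together with the hypothesis that the intrinsic curvature of the CTB equals the first term forces $\inner{AY}{Z}AX=\inner{AX}{Z}AY$ for all $X,Y,Z$, which is equivalent to $\operatorname{rank}A\le 1$ at every regular point. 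Consequently either $A\equiv 0$ on $R(f)$ --- in which case $\nu$ is parallel along $f$, $f(M^n)$ lies in a totally geodesic $S^n(c)\subset S^{n+1}(c)$, and the totally geodesic conclusion holds --- or there is a nonempty open set $V\subset R(f)$ on which $A$ has rank exactly one, with a smooth nonzero principal curvature $\lambda$, a principal line field $\ell$, and an $(n-1)$-dimensional kernel distribution $\mathcal D=\ker A$.

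In the rank-one case, a standard Codazzi computation (using $A|_{\mathcal D}\equiv 0$ and $\dim\mathcal D=n-1$) shows $\mathcal D$ is involutive with leaves totally geodesic both in $M^n$ and in $S^{n+1}(c)$; these leaves are complete $(n-1)$-dimensional submanifolds of constant curvature $c$ in $S^{n+1}(c)$, hence great $(n-1)$-spheres. Each such leaf $L$ has a polar great circle $L^\ast\subset S^{n+1}(c)$ at distance $\pi/(2\sqrt c)$ from $L$. Selecting one of the two polar points of each leaf continuously with the aid of $\nu$ and integrating the orthogonal trajectories of $\mathcal D$, I would produce a smooth center curve $\gamma$ in $S^{n+1}(c)$ for which $f$ realises the tube of radius $\pi/(2\sqrt c)$ about $\gamma$; the value $\pi/(2\sqrt c)$ is forced as the unique radius making the equidistant $(n-1)$-spheres great --- equivalently, totally geodesic --- and the absence of rank-zero points on $V$ supplies the umbilic-free condition. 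Compactness encoded in a CTB of space form type of positive curvature then forces $\gamma$ to close up, the tube parametrisation exhibits $M^n\cong S^1\times S^{n-1}$, and orientability is automatic.

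The main obstacle I expect is this last globalisation step: one must verify that the singular set of $f$ is exactly the preimage of $\gamma$, that the tube parametrisation extends smoothly across singularities (equivalently, that $\ell$ is precisely the direction in which $df$ degenerates), and that $\gamma$ really does close up into a smooth regular loop. These are intrinsically front-theoretic statements relying on the interplay between the Weingarten-type map $-d\nu$ (which is defined even at singular points of a front) and the kernel of $df$, together with the completeness built into the definition of a CTB of space form type.
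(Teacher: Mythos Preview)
Your overall strategy matches the paper's: reduce to $c=1$, use the Gauss equation to force $\operatorname{rank}(d\nu)\le 1$, and in the non-totally-geodesic case exhibit $f$ as a tube over a curve $\gamma$ via the totally geodesic leaves of $\ker A$. The local picture you describe (the kernel distribution $\mathcal D$, its integral leaves being great $(n-1)$-spheres, the center curve at distance $\pi/(2\sqrt c)$) is exactly what the paper builds in its special-coordinate lemma.

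The genuine gap is the umbilic-free claim. You say ``the absence of rank-zero points on $V$ supplies the umbilic-free condition'', but $V$ is only the open set where $\operatorname{rank}A=1$; nothing you have written excludes a nonempty umbilic set $\mathcal U_f$ elsewhere, and you list the wrong issues as obstacles in your final paragraph. The paper's missing ingredient is a Massey-type ODE: along each asymptotic curve (a unit-speed geodesic inside a leaf of $\mathcal D$), the curvature radius $\rho$ satisfies $\rho''=\rho$, so $\rho(t)=a\cos t+b\sin t$ and the principal curvature map $\Lambda=[1:\rho]$ stays bounded away from the umbilic value $[0:1]$; hence no asymptotic curve can accumulate at an umbilic point. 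Combining this with the fact that \emph{every} great-$(n-1)$-sphere leaf of the tube carries a singular point (where $\Lambda=[1:0]$), a limiting argument at a boundary umbilic $p\in\partial\mathcal U_f$ produces a point that is simultaneously singular and umbilic; but at such a point of a front one has $(df)_p=0$, contradicting the corank-one property of singular points. Without this ODE step you have no mechanism to rule out umbilics, and the classification collapses.

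Two smaller corrections. ``Compactness encoded in a CTB of space form type'' overstates the hypothesis: space-form type means only that the positive semi-definite first fundamental form is \emph{complete}. Closure of $\gamma$ is obtained instead from: every $s$-slice of the tube contains a singular point, so $S_f\neq\emptyset$; completeness then forces $S_f$ to be compact; hence the $s$-parameter is periodic. And orientability is not automatic: in the non-co-orientable case one passes to the co-orientable double cover, and one still needs a short frame argument (an orientation flip of the normal $n$-frame along one period of $\gamma$ would force $\gamma'(s_0)=-\gamma'(s_0)$, contradicting regularity).
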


To prove Theorem \ref{thm:main},
we classify {\it weakly complete} 
constant curvature $1$ fronts
in $S^{n+1}$ (Theorem \ref{thm:w-complete}),
where completeness implies weak completeness 
(cf.\ Fact \ref{fact:MU_complete}).
Using these, we then investigate some local and global properties 
of the caustics of constant curvature $1$ fronts.
Caustics are defined as the singular loci of parallel fronts.
It is known that caustics of 
flat fronts in $\Sigma^3(c)$ are also flat fronts
(cf.\
\cite{Roitman} for the case of $H^3$; 
\cite{MU} for the case of $\R^3$; 
\cite{KitaUme} for the case of $S^3$).
We can prove a similar result for constant curvature $1$ fronts in $S^{n+1}$
(Proposition \ref{prop:caus-tube}).
Even if the original front is connected or co-orientable,
its caustic will, in general, not be connected or co-orientable.
However, the following holds.

\begin{introtheorem}\label{thm:caustic}
Let $f:M^n\rightarrow S^{n+1}$ be 
a connected non-totally-geodesic 
weakly complete constant curvature $1$ front.
Then, the caustic $f^C$ of $f$ is also 
a connected non-totally-geodesic
weakly complete constant curvature $1$ front.
Moreover, if $f$ is complete $($resp.\ non-co-orientable$)$ 
then $f^C$ is also complete $($resp.\ non-co-orientable$)$.
\end{introtheorem}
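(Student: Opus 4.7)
The plan is to combine the classification in Theorem~\ref{thm:w-complete} with the caustic-preservation Proposition~\ref{prop:caus-tube}, and then verify each conclusion by working in the explicit tube parametrization. Since $f$ is connected, non-totally-geodesic, and weakly complete, Theorem~\ref{thm:w-complete} realizes $f$, up to isometry of $S^{n+1}$, as an umbilic-free tube of radius $\pi/2$ over a closed regular curve $\gamma:S^1\to S^{n+1}$, namely
\[
 f(s,u)=\sum_{i=1}^{n}u_i\,e_i(s),\qquad \sum_{i=1}^{n}u_i^2=1,
\]
on $S^1\times S^{n-1}$, where $\{e_i(s)\}$ is a smooth orthonormal frame of the normal bundle of $\gamma$ in $TS^{n+1}$. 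The unit normal of $f$ is $\nu(s,u)=-\gamma(s)$, which is crucial for the caustic computation.

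First I would apply Proposition~\ref{prop:caus-tube} to obtain immediately that $f^C$ is a constant curvature $1$ front. This also provides a parametrization of $f^C$ as the parallel front $\cos(t(s,u))f+\sin(t(s,u))\nu$ evaluated at the focal time $t(s,u)$ associated to the unique non-zero principal curvature (in the $\partial_s$ direction); the $n-1$ principal curvatures along the $\partial_{u_i}$ directions vanish and contribute only the degenerate lower-dimensional antipodal curve $-\gamma$, which does not form a front and must be discarded. Unpacking this parametrization shows that the non-degenerate part is itself a tube of radius $\pi/2$ over a closed regular \emph{dual} curve $\gamma^{C}:S^1\to S^{n+1}$ built from $\gamma$ and its geodesic curvature data. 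From this description: $f^C$ is connected, because $\gamma^{C}$ is the continuous image of $S^1$; and $f^C$ is non-totally-geodesic, because a tube of radius $\pi/2$ over a non-constant closed curve in $S^{n+1}$ is never a great $n$-sphere.

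Next I would establish weak completeness and the completeness statement. Because $f$ has no umbilic points, the tube parametrization is defined on all of the source $M^n\cong S^1\times S^{n-1}$, and the induced first-plus-third fundamental form of $f^C$ differs from that of $f$ only by smooth bounded trigonometric factors involving $t(s,u)$ and its first derivatives. Any divergent path in $M^n$ has infinite $f$-sum length by weak completeness of $f$; those same bounded factors keep its $f^C$-sum length infinite, so $f^C$ is weakly complete. The analogous comparison on the degenerate first fundamental form shows that completeness of $f$ transfers to completeness of $f^C$, giving the first parenthetical statement.

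The main obstacle is the preservation of non-co-orientability. Co-orientability of a front is equivalent to triviality of the monodromy of its unit normal along loops in $M^n$. For $f$ this monodromy is that of $\nu=-\gamma$, which is controlled by the holonomy of the normal bundle of $\gamma$ under the chosen frame $\{e_i(s)\}$. The unit normal $\nu^C$ of $f^C$ is a specific linear combination of $\gamma(s)$ and $\sum u_i e_i(s)$ whose monodromy around any loop of $M^n$ is governed by the same orthogonal transformation of the frame $\{e_i(s)\}$. From this I would conclude that the $\Z/2$-double covers of $M^n$ associated to the co-orientations of $f$ and $f^C$ coincide, so $f$ is non-co-orientable if and only if $f^C$ is. This monodromy bookkeeping, together with the precise identification of the dual curve $\gamma^{C}$ in the second paragraph, is where the most care will be needed.
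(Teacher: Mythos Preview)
Your outline has the right starting point (Theorem~\ref{thm:w-complete} and Proposition~\ref{prop:caus-tube}), but it misreads both results and this creates genuine gaps.

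First, Theorem~\ref{thm:w-complete} does \emph{not} produce a closed curve $\gamma:S^1\to S^{n+1}$; it only yields a complete regular curve $\gamma:\R\to S^{n+1}$ (and it needs co-orientability, so a double-cover argument is required in the non-co-orientable case). Closedness of $\gamma$ is equivalent to \emph{completeness} of $f$ (Proposition~\ref{prop:dev-tube}), not weak completeness, so your connectedness argument via ``continuous image of $S^1$'' is circular.

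Second, you use only half of Proposition~\ref{prop:caus-tube}. Its second assertion says explicitly that the center curve of $f^C$ is the unit tangent $\vect{e}=\gamma'$. This single identification is the whole engine of the paper's proof: once $f^C$ is recognized as the developable tube over $\gamma'$, Proposition~\ref{prop:dev-tube} immediately gives that $f^C$ is a weakly complete umbilic-free constant curvature~$1$ front, and the two parenthetical claims reduce to one-line observations: $\gamma$ closed $\Rightarrow$ $\gamma'$ closed, and $\gamma$ antiperiodic $\Rightarrow$ $\gamma'$ antiperiodic. No metric comparison, no monodromy bookkeeping.

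By contrast, your proposed arguments for those two claims do not go through as stated. The ``bounded factors'' transfer of completeness is not valid: completeness here (Definition~\ref{def:completeCTB}) means the existence of a compactly supported perturbation of the degenerate first fundamental form to a complete Riemannian metric, and this property is not preserved by a bounded conformal factor on a semi-definite metric (indeed the singular sets of $f$ and $f^C$ are different). And your description of $\nu^C$ is wrong: since $f^C$ is the tube over $\gamma'$, its unit normal is $\nu^C=\gamma'$, not a linear combination of $\gamma$ and $\sum u_i e_i$; so the monodromy of $\nu^C$ is governed by the (anti)periodicity of $\gamma'$, not by the holonomy of the normal frame $\{e_i\}$.
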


Finally in Section \ref{sec:transformation}, 
we investigate constant curvature $1$ fronts in the case of $n=2$.
On the regular point set of a constant curvature $1$ front,
its extrinsic curvature vanishes identically.
Thus, we also call such fronts {\it extrinsically flat}.
Although the local nature of extrinsically flat surfaces in $S^3$
is the same as that of flat surfaces in $\R^3$,
via central projection of a hemisphere to a tangent space
(cf.\ Example \ref{ex:dev-Moebius}),
they may display different global properties
(cf.\ Example \ref{ex:non-co-ori}).
On the other hand, in the particular case of $n=2$,
we can define the {\it duals} of tubes of radius $\pi/2$ in $S^3$.
Such duals may not be fronts in general (cf.\ Remark \ref{rem:INS}).
We classify tubes of radius $\pi/2$ in $S^3$ 
whose duals are congruent to the original ones
(i.e., {\it self-dual} developable tubes)
in Theorem \ref{prop:self-dual}.

This paper is organized as follows.
In Section \ref{sec:prelim}, 
we review the definition and fundamental properties 
of coherent tangent bundles and fronts.
In Section \ref{sec:CTB}, 
we give a definition of coherent tangent bundles of space form type
and give some examples.
In Section \ref{sec:proof_A}, 
we shall investigate constant curvature $1$ fronts in $S^{n+1}$,
and prove Theorem \ref{thm:main} and Theorem \ref{thm:caustic}.
In Section \ref{sec:transformation}, 
we restrict to the case of $n=2$ and investigate their duals.

\section{Preliminaries}
\label{sec:prelim}

In this section, we review the definitions and basic properties of 
space forms, coherent tangent bundles and (wave) fronts.

\subsection{Space form}
Let $S^{n+1}$ be the unit sphere 
$
  S^{n+1} = \{ \vect{x} \in \R^{n+2} \,;\, \inner{\vect{x}}{\vect{x}}=1 \},
$
where $\inner{~}{~}$ is the canonical inner product 
of the Euclidean space $\R^{n+2}$.
We denote by $H^{n+1}$ the hyperbolic space 
\[
  H^{n+1}=\left\{
    \vect{x}=(x_1,\cdots,x_{n+2})\in \R^{n+2}_1\,;\,
    \inner{\vect{x}}{\vect{x}}_L=-1,\, x_{1}>0
  \right\},
\]
where $\R^{n+2}_1$ is the Lorentz-Minkowski space
equipped with the canonical Lorentzian inner product
$\inner{\vect{x}}{\vect{x}}_L=-(x_1)^2+(x_2)^2+\cdots +(x_{n+2})^2$.
That is, $\Sigma^{n+1}(0)=\R^{n+1}$,
$\Sigma^{n+1}(1)=S^{n+1}$
and $\Sigma^{n+1}(-1)=H^{n+1}$,
where $\Sigma^n(c)$ is the connected, simply connected 
$n$-dimensional space form of constant sectional curvature $c$.
Then, the unit tangent bundles of $\R^{n+1}$, $S^{n+1}$ and $H^{n+1}$
are identified with
\begin{equation}\label{eq:UTbund}
\begin{split}
  &T_1\R^{n+1}=\R^{n+1}\times S^n,\\
  &T_1S^{n+1}= \{ (p,v) \in S^{n+1}\times S^{n+1} \,;\, \inner{p}{v}=0 \},\\
  &T_1H^{n+1}= \{ (p,v) \in H^{n+1}\times S^{n+1}_1 \,;\, \inner{p}{v}_L=0 \},
\end{split}
\end{equation}
respectively, where 
$
  S^{n+1}_1=\left\{ \vect{x}\in \R^{n+2}_1\,;\, \inner{\vect{x}}{\vect{x}}_L=1 \right\}
$
is the de Sitter space.
For $\vect{x}^1,\cdots, \vect{x}^{n+1}\in \R^{n+2}$,
set a vector 
$\vect{x}^1\wedge \vect{x}^2\wedge \cdots \wedge \vect{x}^{n+1}\in \R^{n+2}$
as
\begin{equation}\label{eq:wedge_prod}
  \vect{x}^1 \wedge \vect{x}^2 \wedge \cdots \wedge \vect{x}^{n+1}
  =\left|
    \begin{array}{cccc}
    e_1 & e_2 & \cdots & e_{n+2} \\
    x^1_1 & x^1_2 & \cdots & x^1_{n+2} \\
    \cdots & \cdots & \cdots & \cdots \\
    x^{n+1}_1 & x^{n+1}_2 & \cdots & x^{n+1}_{n+2} 
    \end{array}
  \right|,
\end{equation}
where 
$\{ e_1,\cdots,e_{n+2} \}$ is the canonical basis of $\R^{n+2}$
and $\vect{x}^j=(x^j_1 ,\, \cdots ,\, x^j_{n+2})$
for each $j=1,\cdots,(n+1)$.

\subsection{Coherent tangent bundle}

In \cite{SUY0, SUY1}, Saji-Umehara-Yamada 
gave a definition of {\it coherent tangent bundles\/},
which is a generalized notion of Riemannian manifolds.

\begin{definition}[\cite{SUY0, SUY1}]\label{def:CTB}
Let $\mathcal{E}$ be a vector bundle of rank $n$
over a smooth $n$-manifold $M^n$.
We equip a fiber metric $\inner{~}{~}$ on $\mathcal{E}$
and a metric connection $D$ on $(\mathcal{E},\inner{~}{~})$.
If $\varphi : TM^n \rightarrow \mathcal{E}$
is a bundle homomorphism which satisfies
\begin{equation}\label{eq:Codazzi}
  D_{X}\varphi(Y)-D_{Y}\varphi(X)-\varphi([X,Y])=0
\end{equation}
for smooth vector fields $X$, $Y$ on $M^n$,
then $\mathcal{E}=(M^n,\,\mathcal{E},\,\inner{~}{~},\,D,\,\varphi)$
is called a {\it coherent tangent bundle} over $M^n$.
\end{definition}

It is known that, in general, coherent tangent bundles can be constructed
only from positive semi-definite metrics (so-called {\it Kossowski metrics}).
This was proved in \cite{HHNSUY} for the case of $n=2$,
and in \cite{SUY3} for the higher dimensional case.

A point $p\in M^n$ is called {\it $\varphi$-singular\/},
if $\varphi_p : T_pM^n \rightarrow \mathcal{E}_p$ is not a bijection, 
where $\mathcal{E}_p$ is the fiber of $\mathcal{E}$ at $p$.
Let $S_{\varphi}$ be the set of $\varphi$-singular points.
On $M^n\setminus S_{\varphi}$,
the Levi-Civita connection of the pull-back metric $g:=\varphi^*\inner{~}{~}$
coincides with the pull-back of $D$.
Thus, we may recognize
the concept of coherent tangent bundles 
as a generalization of Riemannian manifolds. 

A coherent tangent bundle $\mathcal{E}=(M^n,\,\mathcal{E},\,\inner{~}{~},\,D,\,\varphi)$
is called {\it orientable}, if $M^n$ is orientable.
Moreover, $\mathcal{E}$ is {\it co-orientable}
if the vector bundle $\mathcal{E}$ is orientable
(i.e., there exists a smooth section $\mu$ of $\det(\mathcal{E}^{\ast})$
defined on $M^n$ such that $\mu(\vect{e}_1,\cdots,\vect{e}_n)=\pm1$
holds for any orthonormal frame $\{\vect{e}_1,\cdots,\vect{e}_n\}$ on $\mathcal{E}$).
If the $\varphi$-singular set $S_{\varphi}$ is empty, 
the orientability is equivalent to the co-orientability.
Moreover, even if the coherent tangent bundle 
is non-co-orientable (resp.\ non-orientable),
there exists a double covering $\pi : \hat{M}^n \rightarrow M^n$
such that the pull-back of $\mathcal{E}$ by $\pi$ (resp.\ $\hat{M}^n$) 
is co-orientable (resp.\ orientable) coherent tangent bundle.

\


On the other hand,
let $g$ be a positive semi-definite metric on 
a smooth $n$-manifold $M^n$.
A point $p\in M^n$ is called {\it $g$-singular\/},
if $g$ is not positive definite at $p$.
Let $S_g$ be the set of $g$-singular points.

\begin{definition}\label{def:completeCTB}
If there exists a symmetric covariant tensor $T$ on $M^n$
with compact support such that
$g+T$ gives a complete metric on $M^n$,
$g$ is called {\it complete\/}.
We call a coherent tangent bundle {\it complete\/},
if the pull-back metric is complete.
\end{definition}

Such a completeness was used  
in the study of maximal surfaces in $\R^3_1$ \cite{Kobayashi,UY},
flat fronts in $H^3$ \cite{KUY1} and in $\R^3$ \cite{MU}.
Since $S_g$ is closed, we have the following.

\begin{lemma}\label{lem:sing_compact}
The $g$-singular set $S_g$ of
a complete positive semi-definite metric $g$ 
is compact, if $S_g$ is non-empty.
If $M^n$ is compact,
every positive semi-definite metric on $M^n$ is complete.
\end{lemma}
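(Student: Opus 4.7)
The plan is to handle the two assertions separately, both via short, direct arguments using the definition of completeness for positive semi-definite metrics.

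For the first assertion, I would start from the hypothesis that $g$ is complete, so there exists a symmetric covariant tensor $T$ with compact support such that $\tilde g := g+T$ is a complete Riemannian metric. The key observation is that on the open set $M^n \setminus \operatorname{supp}(T)$ the tensor $T$ vanishes identically, so $g = \tilde g$ there, and in particular $g$ is positive definite at every point outside $\operatorname{supp}(T)$. Therefore $S_g \subset \operatorname{supp}(T)$. Since the statement in the paper explicitly remarks that $S_g$ is closed (because the positive-definite locus is open), and a closed subset of the compact set $\operatorname{supp}(T)$ is compact, we conclude that $S_g$ is compact whenever it is non-empty.

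For the second assertion, assume $M^n$ is compact and let $g$ be any positive semi-definite metric on $M^n$. Using a partition of unity, choose any smooth Riemannian metric $h$ on $M^n$. Set $T := h - g$, which is a smooth symmetric covariant tensor on $M^n$; since $M^n$ itself is compact, $T$ has compact support automatically. Then $g+T = h$ is a Riemannian metric on the compact manifold $M^n$, hence complete (for instance by Hopf--Rinow, as the underlying metric space is compact). Thus $g$ satisfies the completeness condition of Definition \ref{def:completeCTB}.

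Neither step presents a real obstacle: both reduce to the bookkeeping observation that the modification tensor $T$ is only needed to \emph{repair} $g$ on a bounded region, which in the compact-manifold case is all of $M^n$, and in the general case confines the possible singular locus to $\operatorname{supp}(T)$. The only subtlety worth double-checking is that $S_g$ is indeed closed, but this follows at once from the fact that positive definiteness of a smooth symmetric bilinear form is an open condition on the base.
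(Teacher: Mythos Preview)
Your proposal is correct and matches the paper's intent: the paper does not spell out a proof at all, merely prefacing the lemma with the observation ``Since $S_g$ is closed,'' which is exactly the point you isolate. Your argument for each assertion---$S_g\subset\operatorname{supp}(T)$ for the first, and taking $T=h-g$ for any Riemannian metric $h$ for the second---is the natural way to unpack the definition and is precisely what the paper is tacitly invoking.
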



\subsection{Frontal, induced coherent tangent bundle}
Let $(N^{n+1},\, h)$ be a Riemannian manifold
and $T_1N^{n+1}$ the unit tangent bundle.
A smooth map $f:M^n\rightarrow N^{n+1}$
is called a {\it frontal\/}, if for each point $p\in M^n$,
there exist a neighborhood $U$ of $p$ 
and a smooth vector field $\nu : U \rightarrow T_1N^{n+1}$ along $f$
such that $h(df_q(\vect{v}),\nu(q))=0$ 
holds for each $q\in U$ and $\vect{v} \in T_qM^n$.
As in \cite[Example 2.4]{SUY1},
frontals induce coherent tangent bundles.

\begin{example}[{\cite[Example 2.4]{SUY1}}]
\label{ex:induced_CTB}\,
For a frontal $f : M^n\rightarrow (N^{n+1},h)$,
set
\begin{itemize}
\item
 $\mathcal{E}_f$ is the subbundle of the pull-back bundle $f^*TN^{n+1}$ 
 perpendicular to $\nu$,
\item
 $\varphi_f : TM^n \rightarrow \mathcal{E}_f$ 
 defined as $\varphi_f(X):=df(X)$,
\item
 $D$ is the tangential part 
 of the Levi-Civita connection on $N^{n+1}$,
\item
 $\inner{~}{~}$ is the metric on $\mathcal{E}_f$ induced from 
 the metric on $N^{n+1}$.
\end{itemize}
Then, $\mathcal{E}_f=(M^n,\mathcal{E}_f,\inner{~}{~},D,\varphi_f)$
is a coherent tangent bundle.
\end{example}

We call $\mathcal{E}_f$ the {\it induced coherent tangent bundle}.
A $\varphi$-singular point of 
the induced coherent tangent bundle $\mathcal{E}_f$
is a {\it singular} point of $f$
(i.e., ${\rm rank}(df)_p<n$ holds).
A non-singular point is called {\it regular}.
A frontal $f$ is called complete 
(resp.\ orientable, co-orientable)
if the induced coherent tangent bundle $\mathcal{E}_f$ is complete 
(resp.\ orientable, co-orientable).
We remark that a frontal $f$ is co-orientable
if and only if $\nu$ is well-defined on $M^n$.

We now consider the case that 
$N^{n+1}$ is the simply connected space form 
$\Sigma^{n+1}(c)$ of constant sectional curvature $c$.
If $c=0$ (resp.\ $1$, $-1$),
the unit normal vector field $\nu$
can be considered as a smooth map into 
$S^n$ (resp.\ $S^{n+1}$, $S^{n+1}_1$).
The map $L=(f,\nu) : U \rightarrow T_1\Sigma^{n+1}(c)$
is called the {\it Legendrian lift\/} of 
$f$ (cf.\ \eqref{eq:UTbund}).

\begin{fact}[{\cite[Proposition 2.4]{SUY2}}]
Let $\Sigma^{n+1}(c)$ be the simply connected space form 
of constant sectional curvature $c$,
$f:M^n\rightarrow \Sigma^{n+1}(c)$ a frontal
with a $($locally defined$)$ unit normal vector field $\nu$, 
and $(M^n,\mathcal{E}_f,\inner{~}{~},D,\varphi_f)$ 
the induced coherent tangent bundle.
Then, the identity
 \begin{multline}\label{eq:Gauss}
  \inner{R^D(X,Y)\xi}{\zeta}\\=
  c\,\det\begin{pmatrix}
    \inner{\varphi_f(Y)}{\xi}& \inner{\varphi_f(Y)}{\zeta} \\
    \inner{\varphi_f(X)}{\xi}& \inner{\varphi_f(X)}{\zeta} 
  \end{pmatrix}
  +
  \det\begin{pmatrix}
    \inner{d\nu(Y)}{\xi}& \inner{d\nu(Y)}{\zeta} \\
    \inner{d\nu(X)}{\xi}& \inner{d\nu(X)}{\zeta} 
  \end{pmatrix}
 \end{multline}
holds,
where $X$ and $Y$ are smooth vector fields on $M^n$,
$\xi$ and $\zeta$ are smooth sections of $\mathcal{E}_f$,
and $R^D$ is the curvature tensor of the connection $D$ given by
\[
  R^D(X,Y)\xi:=D_XD_Y\xi-D_YD_X\xi-D_{[X,Y]}\xi.
\]
\end{fact}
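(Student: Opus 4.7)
The plan is to relate the induced connection $D$ on $\mathcal{E}_f$ to the pulled-back Levi-Civita connection of the ambient space form, and then to invoke the constant-sectional-curvature form of the ambient curvature tensor.

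First, let $\tilde\nabla$ denote the Levi-Civita connection of $\Sigma^{n+1}(c)$ and, by abuse of notation, its pull-back acting on sections of $f^*T\Sigma^{n+1}(c)$. Since $\nu$ is a unit section, $\inner{\tilde\nabla_X\nu}{\nu}=0$, so $\tilde\nabla_X\nu$ is automatically a section of $\mathcal{E}_f$; I identify $d\nu(X)$ with $\tilde\nabla_X\nu$. By construction of $D$, for any section $\xi$ of $\mathcal{E}_f$ one has $D_X\xi = \tilde\nabla_X\xi - \inner{\tilde\nabla_X\xi}{\nu}\nu$, and differentiating $\inner{\xi}{\nu}\equiv 0$ gives $\inner{\tilde\nabla_X\xi}{\nu} = -\inner{d\nu(X)}{\xi}$. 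Hence
\begin{equation*}
  \tilde\nabla_X\xi = D_X\xi - \inner{d\nu(X)}{\xi}\,\nu,
\end{equation*}
an identity that makes sense at every point of $M^n$, including $\varphi$-singular points, because $\tilde\nabla$ and $\nu$ are defined everywhere on $M^n$.

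Next, I would substitute this into $R^{\tilde\nabla}(X,Y)\xi = \tilde\nabla_X\tilde\nabla_Y\xi - \tilde\nabla_Y\tilde\nabla_X\xi - \tilde\nabla_{[X,Y]}\xi$ and expand, applying the identity above once more to $\tilde\nabla_X(D_Y\xi)$ and $\tilde\nabla_Y(D_X\xi)$. The result decomposes into three pieces: (i) $R^D(X,Y)\xi$; (ii) a collection of scalar multiples of $\nu$ coming from the various normal corrections, together with one $\nu$-term produced by $\tilde\nabla_{[X,Y]}\xi$; and (iii) the ``shape-operator'' piece $-\inner{d\nu(Y)}{\xi}\,d\nu(X) + \inner{d\nu(X)}{\xi}\,d\nu(Y)$. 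Pairing with a section $\zeta$ of $\mathcal{E}_f$ kills every term of type (ii) because $\inner{\nu}{\zeta}=0$, leaving
\begin{equation*}
  \inner{R^{\tilde\nabla}(X,Y)\xi}{\zeta}
  = \inner{R^D(X,Y)\xi}{\zeta}
  - \det\begin{pmatrix} \inner{d\nu(Y)}{\xi} & \inner{d\nu(Y)}{\zeta} \\ \inner{d\nu(X)}{\xi} & \inner{d\nu(X)}{\zeta} \end{pmatrix}.
\end{equation*}

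Finally, because $\Sigma^{n+1}(c)$ has constant sectional curvature $c$, its curvature tensor satisfies $R(u,v)w = c\bigl(\inner{v}{w}u-\inner{u}{w}v\bigr)$ pointwise; taking $u=\varphi_f(X)$, $v=\varphi_f(Y)$, $w=\xi$ and pairing with $\zeta$ gives $\inner{R^{\tilde\nabla}(X,Y)\xi}{\zeta}$ equal to $c$ times the first determinant in \eqref{eq:Gauss}. Substituting this into the preceding display yields the stated identity. The main obstacle I expect is the bookkeeping in the middle step: several $\nu$-valued contributions must be gathered carefully and shown either to cancel outright or to be annihilated by $\zeta$, and one has to respect the $(Y,X)$ row ordering used in the determinants of the statement rather than the more common $(X,Y)$ convention.
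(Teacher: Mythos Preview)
The paper does not give its own proof of this statement: it is stated as a \emph{Fact} and attributed to \cite[Proposition 2.4]{SUY2}, with no argument supplied beyond the remark that \eqref{eq:Gauss} is called the Gauss equation. So there is nothing in the paper to compare your argument against.

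That said, your argument is correct and is exactly the standard derivation of the Gauss equation, carried out at the level of the bundle $\mathcal{E}_f$ rather than the tangent bundle of an immersed hypersurface. The decomposition $\tilde\nabla_X\xi = D_X\xi - \inner{d\nu(X)}{\xi}\,\nu$ is valid at every point, singular or not, because it uses only the pulled-back connection and the globally defined section $\nu$; the identity $R^{f^*\tilde\nabla}(X,Y)\xi = R^{\Sigma}(df(X),df(Y))\xi$ for the curvature of a pull-back connection likewise holds regardless of the rank of $df$. The sign and row-ordering checks you flag are the only places one can slip, and with the $(Y,X)$ ordering of the rows in the statement your computation lands on the correct sign. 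In short: your proof is fine, and it is presumably very close to what \cite{SUY2} does; the present paper simply quotes the result.
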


The equation \eqref{eq:Gauss} is called the {\em Gauss equation}.

\subsection{Front}
A frontal $f : M^n \rightarrow \Sigma^{n+1}(c)$ is called a ({\it wave\/}) {\it front},
if its Legendrian lift $L$ is an immersion.
Then, $ds^2_\#:=\inner{df}{df}+\inner{d\nu}{d\nu}$
defines a Riemannian metric on $M^n$ which we call the {\it lift metric}.
If $ds^2_\#$ is complete,  $f$ is called {\it weakly complete}.
Then the following holds.

\begin{fact}[{\cite[Lemma 4.1]{MU}}]
\label{fact:MU_complete}
A complete front is weakly complete.
\end{fact}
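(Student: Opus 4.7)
The plan is to exploit the pointwise inequality $ds^2_\# \geq g$, where $g := \varphi_f^*\inner{~}{~} = \inner{df}{df}$ is the pull-back metric, together with the fact that the ``compact support'' modification $T$ appearing in Definition \ref{def:completeCTB} is localized.

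The key pointwise observation is trivial: for any $v \in T_qM^n$,
\[
  ds^2_\#(v,v) = \inner{df(v)}{df(v)} + \inner{d\nu(v)}{d\nu(v)}
              \geq g(v,v).
\]
Integrating along any piecewise smooth curve $\gamma$ immediately gives that the $ds^2_\#$-length of $\gamma$ is $\geq$ its $g$-length.

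Now I would argue by the divergent-curve characterization of completeness. Assume $f$ is complete, so there is a symmetric $(0,2)$-tensor $T$ on $M^n$ with $K := \operatorname{supp}(T)$ compact and $\tilde g := g + T$ a complete Riemannian metric. Let $\gamma : [0,\ell) \to M^n$ be an arbitrary divergent curve, i.e.\ a curve which eventually leaves every compact subset of $M^n$. Since $K$ is compact, there exists $t_0 \in [0,\ell)$ such that $\gamma(t) \in M^n \setminus K$ for all $t \in [t_0,\ell)$. On $M^n \setminus K$ the tensor $T$ vanishes, so $\tilde g = g$ there, and hence by the pointwise inequality above
\[
  \text{($ds^2_\#$-length of $\gamma|_{[t_0,\ell)}$)}
  \;\geq\;
  \text{($g$-length of $\gamma|_{[t_0,\ell)}$)}
  \;=\;
  \text{($\tilde g$-length of $\gamma|_{[t_0,\ell)}$)}.
\]
The truncated curve $\gamma|_{[t_0,\ell)}$ is still a divergent curve in $M^n$, because truncation at a finite parameter cannot destroy the ``leaves every compact set'' property. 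Since $\tilde g$ is complete, its $\tilde g$-length is infinite, and therefore so is the $ds^2_\#$-length of $\gamma$. As $\gamma$ was an arbitrary divergent curve, $ds^2_\#$ is complete, i.e.\ $f$ is weakly complete.

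I do not anticipate any real obstacle here; the whole content is the combination of (i) the evident pointwise domination $ds^2_\# \geq g$ by the Legendrian lift carrying an extra non-negative term $|d\nu|^2$, and (ii) the fact that the correction tensor $T$ in Definition \ref{def:completeCTB} is compactly supported, so a divergent curve is eventually in the region where $g$ and $g+T$ agree. The only mild care point is making sure one is using the divergent-curve (equivalently, Cauchy-sequence) formulation of completeness rather than geodesic completeness, since $g+T$ is genuinely Riemannian but $g$ itself may be degenerate on $S_\varphi$, so Hopf--Rinow should be invoked for $\tilde g$, not for $g$.
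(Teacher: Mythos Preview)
Your argument is correct. The paper itself does not give a proof: it simply remarks that ``the proof of \cite[Lemma 4.1]{MU} can be applied in our setting'' and omits the details. Your write-up supplies exactly the expected argument---the pointwise domination $ds^2_\#\geq g$ combined with the fact that the correction tensor $T$ in Definition~\ref{def:completeCTB} has compact support, so any divergent curve eventually lives where $g=\tilde g$---and this is the standard proof (and essentially what \cite{MU} does in the flat $\R^3$ setting). There is nothing to compare beyond that.
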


Since the proof of \cite[Lemma 4.1]{MU}
can be applied in our setting, we omit the proof.

A point $p\in M^n$ is called {\it umbilic},
if there are real numbers $\delta_1$, $\delta_2$ 
such that $(\delta_1)^2+(\delta_2)^2\neq0$ and
$\delta_1(df)_p=\delta_2(d\nu)_p$ hold.
For a front $f : M^n \rightarrow \Sigma^{n+1}(c)$
and $\delta>0$, set
\begin{equation}\label{eq:parallel}
  f^{\delta}:=
  \left\{\begin{array}{ll}
  \vspace{1mm}
     f+\delta \nu &(\text{if}~ c=0)\\
  \vspace{1mm}
     (\cos \delta) f+(\sin \delta) \nu &(\text{if}~ c=1)\\
     (\cosh \delta) f+(\sinh \delta) \nu\qquad &(\text{if}~ c=-1)
  \end{array}\right. .
\end{equation}
Then we can check that 
$f^{\delta}$ is a front 
(called the {\it parallel front} of $f$)
with a unit normal $\nu^{\delta}$
given by
\begin{equation*}
  \nu^{\delta}:=
  \left\{\begin{array}{ll}
  \vspace{1mm}
     \nu &(\text{if}~ c=0)\\
  \vspace{1mm}
     (-\sin \delta) f+(\cos \delta) \nu &(\text{if}~ c=1)\\
     (\sinh \delta) f+(\cosh \delta) \nu\qquad &(\text{if}~ c=-1)
  \end{array}\right. .
\end{equation*}
Umbilic points are common in its parallel family.

\begin{lemma}\label{lem:singular-umb}
A singular point $p$ of a front $f$ is umbilic
if and only if $(df)_p=0$.
\end{lemma}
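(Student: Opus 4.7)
The plan is to exploit the defining property that $f$ is a front, namely that the Legendrian lift $L=(f,\nu)$ is an immersion, which translates into the statement that at every point $p$ the kernels of $(df)_p$ and $(d\nu)_p$ intersect trivially in $T_pM^n$. The lemma is then essentially a short case analysis on the pair $(\delta_1,\delta_2)$ appearing in the umbilic condition.

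For the ``if'' direction, if $(df)_p=0$, then choosing $\delta_1=1$ and $\delta_2=0$ gives $\delta_1(df)_p=\delta_2(d\nu)_p=0$ with $(\delta_1)^2+(\delta_2)^2=1\neq 0$, so $p$ is umbilic by definition. Note that $(df)_p=0$ forces $p$ to be singular as soon as $n\geq 1$, so this direction requires no further work.

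For the ``only if'' direction I would argue by contradiction using the immersion property of $L$. Suppose $p$ is both singular and umbilic, so there exist $\delta_1,\delta_2$, not both zero, with $\delta_1(df)_p=\delta_2(d\nu)_p$. If $\delta_1=0$, then $\delta_2\neq 0$ and $(d\nu)_p=0$; since $L=(f,\nu)$ is an immersion this would force $(df)_p$ to have full rank $n$, contradicting that $p$ is singular. Hence $\delta_1\neq 0$, so $(df)_p=\lambda(d\nu)_p$ for $\lambda:=\delta_2/\delta_1$. Because $p$ is singular, there exists $0\neq v\in T_pM^n$ with $(df)_p(v)=0$, whence $\lambda(d\nu)_p(v)=0$. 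If $\lambda\neq 0$, then $v\in\ker(df)_p\cap\ker(d\nu)_p=\{0\}$ by the immersion property of $L$, a contradiction. Therefore $\lambda=0$, which gives $(df)_p=0$ as desired.

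There is no real obstacle here: the whole content is the dichotomy forced by $L$ being an immersion, and the only thing to be careful about is to treat the degenerate values $\delta_1=0$ and $\lambda=0$ separately so as to actually use singularity of $p$ rather than just quote it.
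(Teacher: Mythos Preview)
Your proof is correct and follows essentially the same approach as the paper's: both arguments reduce to the observation that the Legendrian lift $L=(f,\nu)$ being an immersion forces $\ker(df)_p\cap\ker(d\nu)_p=\{0\}$, and then run a short case analysis on the pair $(\delta_1,\delta_2)$. The only cosmetic difference is that the paper argues by supposing $(df)_p\neq 0$ and writing $(d\nu)_p=(\delta_1/\delta_2)(df)_p$, whereas you first rule out $\delta_1=0$ and write $(df)_p=(\delta_2/\delta_1)(d\nu)_p$; the logical content is identical.
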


\begin{proof}
If $(df)_p=0$, then $p$ is umbilic.
Thus, we shall prove the converse.
Assume that $p$ is umbilic.
There exist real numbers $\delta_1$, $\delta_2$ 
$((\delta_1)^2+(\delta_2)^2\neq0)$ such that 
$\delta_1(df)_p=\delta_2(d\nu)_p$.
If we suppose $(df)_p\neq0$, then 
we have $\delta_2\neq0$,
and hence $(d\nu)_p=\delta(df)_p$,
where we put $\delta=\delta_1/\delta_2$.
Since $p$ is a singular point of $f$,
then there exists a non-zero tangent vector $\vect{v}\in T_pM^n$ 
such that $(df)_p(\vect{v})=0$.
Therefore, we have
\[
  (dL)_p(\vect{v})=((df)_p(\vect{v}),(d\nu)_p(\vect{v}))=(0,0),
\]
which contradicts the fact that $f$ is a front.
\end{proof}

\section{Coherent tangent bundle of space form type}
\label{sec:CTB}

A coherent tangent bundle 
$(M^n,\,\mathcal{E},\,\inner{~}{~},\,D,\,\varphi)$
over a smooth $n$-manifold $M^n$ 
is called {\it of constant sectional curvature} $k$, if 
\begin{equation}\label{eq:constant_curvature}
  R^D(X,Y)\xi
  =k\left( \inner{\varphi(X)}{\xi}\varphi(Y)-\inner{\varphi(Y)}{\xi}\varphi(X) \right)
\end{equation}
holds for smooth vector fields $X$, $Y$ on $M^n$
and a smooth section $\xi$ of $\mathcal{E}$.
In particular, if $k=0$, such a coherent tangent bundle is called {\it flat}.
We say a frontal $f$ has {\it constant sectional curvature} or {\it constant curvature} 
(resp.\ is {\it flat}),
if the induced coherent tangent bundle $\mathcal{E}_f$ 
has constant sectional curvature (resp.\ is flat).

\begin{definition}\label{def:CTB_spaceform}
A complete coherent tangent bundle of constant sectional curvature 
is called {\it space form type}.
\end{definition}

If the $\varphi$-singular set $S_{\varphi}$ is empty,
a coherent tangent bundle of space form type is regarded as a space form 
(i.e., a complete Riemannian manifold of constant sectional curvature).
A simply connected space form
is isometric to either a sphere, Euclidean space or hyperbolic space.
In particular, any simply connected compact space form
has positive sectional curvature.
However, if the $\varphi$-singular set $S_{\varphi}$ is non-empty,
there are simply connected compact coherent tangent bundles of space form type
whose sectional curvature is non-positive.

\begin{example}
\label{ex:simp_conn_CTB}
Set 
\begin{equation*}
  f_E : S^n \ni \vect{x}=(x_1,\cdots,x_{n},x_{n+1})
  \longmapsto (x_1,\cdots,x_{n},0) \in \R^{n+1},
\end{equation*}
where we regard $S^n$ as a subset of $\R^{n+1}$
(cf.\ Figure \ref{fig:flat_CTB}).
The singular point set of $f_E$ is the equator $\{\vect{x}\in S^n\,;\, x_{n+1}=0\}$.
Since an unit normal vector field $\nu$ along $f_E$ 
is given by $\nu=(0,\cdots,0,1)$,
$f_E$ is a co-orientable frontal (but not a front).
As in Example \ref{ex:induced_CTB},
$f_E$ induces a coherent tangent bundle
$\mathcal{E}_{f_E}=(S^n,\mathcal{E}_{f_E},\inner{~}{~},D,\varphi_{f_E})$.
Since $S^n$ is compact, 
the pull back metric $g=\varphi_{f_E}^{\ast}\inner{~}{~}$ is complete.
Moreover, the Gauss equation \eqref{eq:Gauss} implies that $R^D=0$ 
(cf.\ Lemma \ref{lem:degenerate-nu}).
Hence $\mathcal{E}_{f_E}$ is a flat coherent tangent bundle of space form type.

Similarly,
let $f_{H} : S^n \rightarrow H^{n+1}$ be the co-orientable frontal defined by
\[
  f_{H}(\vect{x})= \Phi_H \left(\frac{1}{2}(x_1,\cdots,x_{n},0)\right)
  \qquad
  \left(\vect{x}=(x_1,\cdots,x_{n},x_{n+1}) \in S^n\right),
\]
where $\Phi_H: B^{n+1} \rightarrow H^{n+1}$ is the diffeomorphism 
\[
  \Phi_H(\vect{x})=\frac{1}{1-|\vect{x}|^2}(1+|\vect{x}|^2,\,2\vect{x})
  \qquad
  \left(\vect{x}\in B^{n+1}:=\{\vect{x}\in \R^{n+1} \,;\, |\vect{x}|^2<1 \} \right).
\]
As in the case of $\mathcal{E}_{f_E}$,
the coherent tangent bundle $\mathcal{E}_{f_H}$ induced from $f_{H}$ 
is of space form type with sectional curvature $-1$.

\end{example}
\begin{figure}[htb]
\begin{center}
 \begin{tabular}{{c@{\hspace{10mm}}c@{\hspace{10mm}}c}}
  \resizebox{3cm}{!}{\includegraphics{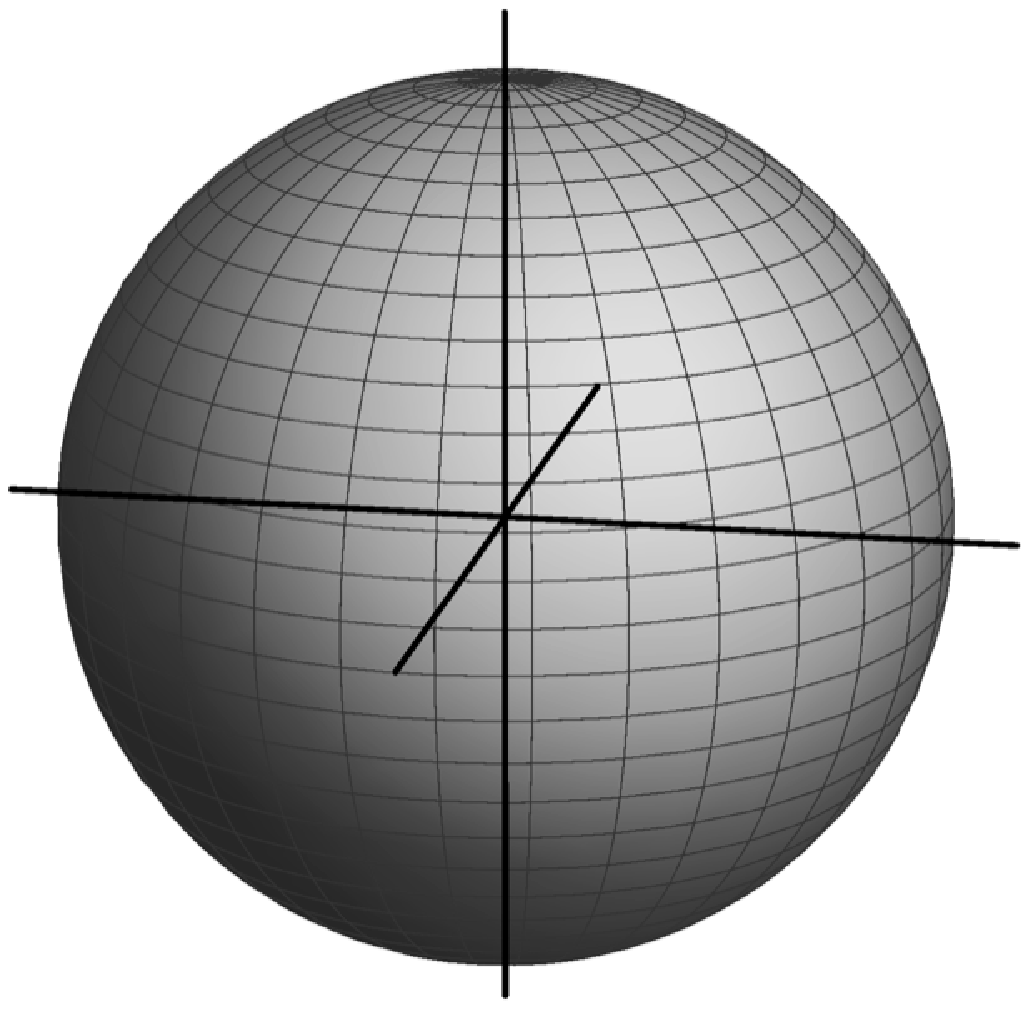}}&
  \resizebox{0.8cm}{!}{\includegraphics{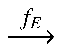}} &
  \resizebox{3cm}{!}{\includegraphics{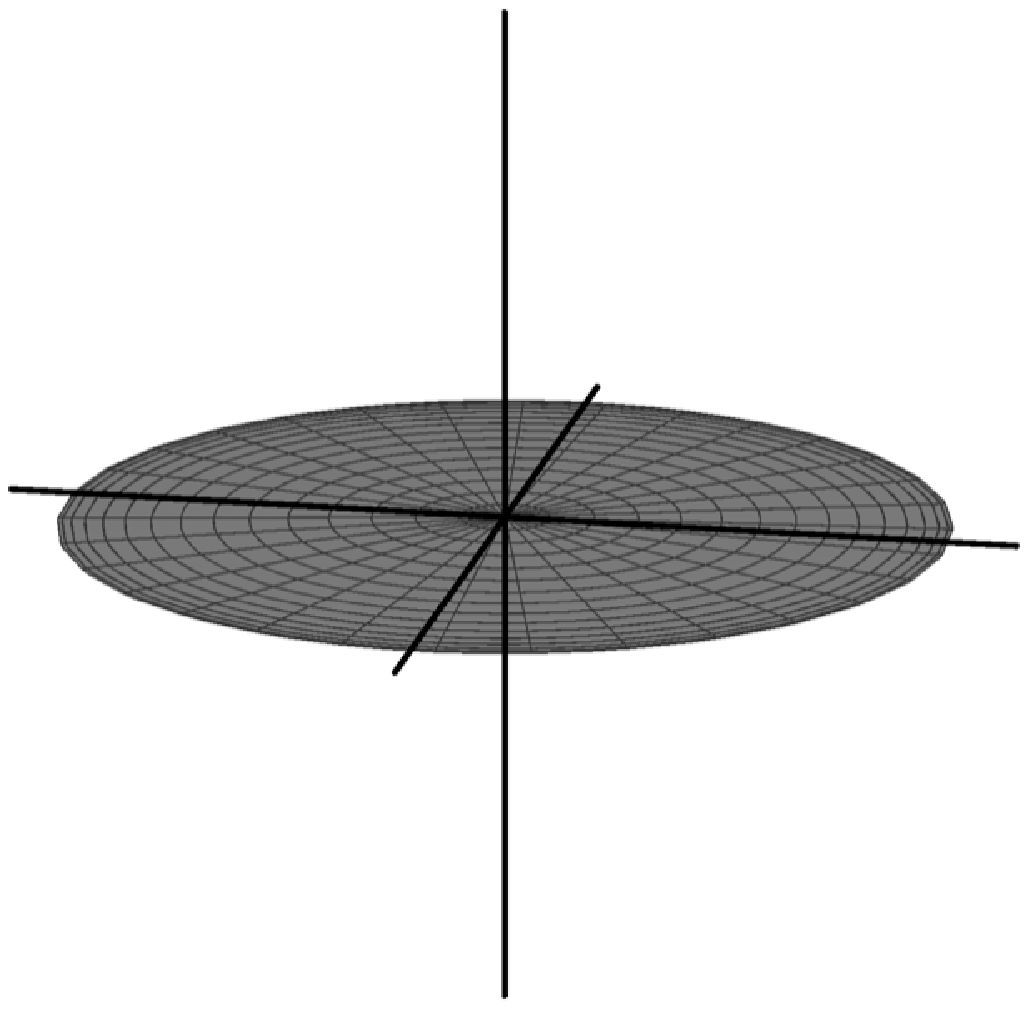}} 
 \end{tabular}
 \caption{The image of the frontal $f_E : S^n \rightarrow \R^{n+1}$
 (cf.\ Example \ref{ex:simp_conn_CTB}).
 The induced coherent tangent bundle $\mathcal{E}_{f_E}$ on $S^n$
 is flat.}
 \label{fig:flat_CTB}
\end{center}
\end{figure}


The unit normal vector field $\nu$ of
either the frontals $f_E$ or $f_H$ in Example \ref{ex:simp_conn_CTB}
satisfies ${\rm rank}(d\nu)=0$.
In general, the following holds.

\begin{lemma}\label{lem:degenerate-nu}
Let $f : M^n\rightarrow \Sigma^{n+1}(c)$ 
be a frontal with a unit normal vector field $\nu$.
Then $f$ has constant sectional curvature $c$
if and only if ${\rm rank}(d\nu)\leq1$ holds on $M^n$.
\end{lemma}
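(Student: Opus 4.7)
The plan is to compare the intrinsic constant sectional curvature condition \eqref{eq:constant_curvature} against the Gauss equation \eqref{eq:Gauss} and to translate the resulting condition on $d\nu$ into a rank condition. Substituting the constant sectional curvature condition into the left-hand side of \eqref{eq:Gauss}, the first ($c$-multiplied) determinant on the right matches (up to the sign convention adopted for $R^D$), so the two conditions are equivalent to requiring that
\[
  D_2(X,Y;\xi,\zeta):=\det\begin{pmatrix}\inner{d\nu(Y)}{\xi}& \inner{d\nu(Y)}{\zeta} \\ \inner{d\nu(X)}{\xi}& \inner{d\nu(X)}{\zeta}\end{pmatrix}=0
\]
for all smooth vector fields $X,Y$ on $M^n$ and all smooth sections $\xi,\zeta$ of $\mathcal{E}_f$.

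The key observation is that $\nu$ takes values in the unit tangent bundle $T_1\Sigma^{n+1}(c)$, and so the covariant derivative $d\nu(X)$ (understood as the tangential part of the directional derivative via the Levi-Civita connection of $\Sigma^{n+1}(c)$) lies in the hyperplane perpendicular to $\nu$ inside $T_{f(p)}\Sigma^{n+1}(c)$, which is precisely the fiber $\mathcal{E}_{f,p}$. Since the fiber metric $\inner{~}{~}$ on $\mathcal{E}_f$ is positive definite, the vanishing of $D_2(X,Y;\xi,\zeta)$ for all $\xi,\zeta\in\mathcal{E}_{f,p}$ is equivalent to the vanishing of the bivector $d\nu(X)\wedge d\nu(Y)\in \Lambda^2\mathcal{E}_{f,p}$, equivalently to the linear dependence of $d\nu(X)$ and $d\nu(Y)$.

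Letting $X,Y$ range over $T_pM^n$, the condition that $d\nu(X)$ and $d\nu(Y)$ be linearly dependent for every pair is exactly the statement that the image of $d\nu_p:T_pM^n\to\mathcal{E}_{f,p}$ has dimension at most one, i.e., $\mathrm{rank}(d\nu)_p\le 1$. Running this equivalence pointwise in both directions closes the proof: if $\mathrm{rank}(d\nu)\le1$ then $D_2\equiv 0$ and \eqref{eq:Gauss} reduces to \eqref{eq:constant_curvature} with $k=c$; conversely, the constant sectional curvature condition forces $D_2\equiv 0$ and hence the rank bound.

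The only real obstacle is the identification $d\nu(X)\in\mathcal{E}_{f,p}$, which amounts to the standard fact that the derivative of a unit vector field is orthogonal to the vector field itself; once this identification is in place, the argument is purely linear-algebraic and the equivalence is immediate from the Gauss equation.
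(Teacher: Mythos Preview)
Your proof is correct and follows essentially the same approach as the paper: both compare \eqref{eq:constant_curvature} with the Gauss equation \eqref{eq:Gauss} to reduce the equivalence to the vanishing of the $2\times 2$ determinant in $d\nu$, and then identify that vanishing with the rank condition. You supply a bit more detail than the paper does---explicitly noting that $d\nu(X)\in\mathcal{E}_{f,p}$ so that the determinant condition becomes a genuine linear-dependence statement---whereas the paper simply asserts the equivalence; but the underlying argument is the same.
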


\begin{proof}
Assume $f$ is constant sectional curvature $c$,
that is, the induced coherent tangent bundle 
$\mathcal{E}_f=(M^n,\mathcal{E}_f,\inner{~}{~},D,\varphi_f)$ 
is of constant sectional curvature $c$.
By \eqref{eq:constant_curvature} and \eqref{eq:Gauss},
\begin{equation}\label{eq:degenerate-nu}
  \det\begin{pmatrix}
    \inner{d\nu(Y)}{\xi}& \inner{d\nu(Y)}{\zeta} \\
    \inner{d\nu(X)}{\xi}& \inner{d\nu(X)}{\zeta} 
  \end{pmatrix}=0
\end{equation}
holds for any vector fields $X$, $Y$ on $M^n$
and sections $\xi$, $\zeta$ of $\mathcal{E}_f$,
which is equivalent to ${\rm rank}(d\nu)\leq1$.
Conversely, assume ${\rm rank}(d\nu)\leq1$,
that is \eqref{eq:degenerate-nu} holds.
By \eqref{eq:Gauss}, 
\[
  \inner{R^D(X,Y)\xi
  -c \Bigl(\inner{\varphi_f(Y)}{\xi}\varphi_f(X)
  -\inner{\varphi_f(X)}{\xi}\varphi_f(Y)\Bigr)}{\zeta}=0
\]
holds for any vector fields $X$, $Y$ on $M^n$
and sections $\xi$, $\zeta$ of $\mathcal{E}_f$,
which implies \eqref{eq:constant_curvature} with $k=c$.
\end{proof}

\section{Classification of constant curvature $1$ fronts}
\label{sec:proof_A}

In this section, we exhibit examples of 
constant curvature $1$ fronts in the unit sphere $S^{n+1}$.
Then, we give the proof of Theorem \ref{thm:main}.
Finally, we investigate the caustics 
of constant curvature $1$ fronts and prove Theorem \ref{thm:caustic}.
In the following, we respectively denote by $f_{u_j}$, $\gamma'$
the derivatives $\partial f/\partial u_j$, $d\gamma/ds$, etc.

\subsection{Developable tubes}

The totally geodesic hypersphere $S^{n+1}\cap\{x_{n+2}=0\}$ in $S^{n+1}$
is a constant curvature $1$ front 
(immersion, in fact) satisfying ${\rm rank}(d\nu)=0$,
which can be considered as a trivial example.
We shall give an example of ${\rm rank}(d\nu)=1$ as follows.

Let $\gamma=\gamma(s) : \R\rightarrow S^{n+1}$
be a complete regular curve parametrized by arclength.
If there exists $L>0$ such that 
$\gamma(s+L)=\gamma(s)$ holds for any $s\in \R$,
we call $\gamma=\gamma(s)$ {\it closed} or {\it periodic}.
The minimum of 
$\{L>0\,;\, \gamma(s+L)=\gamma(s) \text{ for any } s\in \R \}$
is called the {\it period} of $\gamma$.
Moreover, if $\gamma(s+L/2)=-\gamma(s)$ for each $s\in \R$,
we call $\gamma(s)$ {\it antiperiodic}.
Then, $L/2$ is called the {\it antiperiod}.

\begin{definition}
For a complete regular curve $\gamma=\gamma(s)$,
set $f : \R\times S^{n-1}\rightarrow S^{n+1}$ as
\begin{equation}\label{eq:dev-tube}
  f(s,\vect{x})=x_1\vect{e}_1(s)+\cdots+x_n\vect{e}_n(s)
  \qquad
  (s\in \R,\, \vect{x}=(x_1,\cdots,x_n)\in S^{n-1}),
\end{equation}
where $\{\vect{e}_1,\cdots,\vect{e}_n\}$
is an orthonormal frame of the normal bundle $\cup_{s\in \R}(\gamma'(s))^{\perp}$.
We call $f$ the {\it tube of radius $\pi/2$} or the {\it developable tube}.
\end{definition}

If $\gamma$ is closed with period $L$ (resp.\ antiperiodic with antiperiod $L/2$),
we can regard $f$ as a map
$\bar{f} : S^1(L) \times S^{n-1}\rightarrow S^{n+1}$
(resp.\ $\bar{f} : S^1(L/2) \times S^{n-1}\rightarrow S^{n+1}$), 
where 
$S^1(L):=\R/L\Z$.
We remark that $f$ gives a parametrization of 
the set $T(\gamma)$ 
which consists of points whose geodesic distance to $\gamma=\gamma(s)$ is $\pi/2$.
Since $T(\gamma)=T(-\gamma)$,
we call $\gamma$ and $-\gamma$ the center curves of $T(\gamma)$.

\begin{lemma}\label{lem:dev-tube}
Let $f : \R\times S^{n-1}\rightarrow S^{n+1}$
be a developable tube given by \eqref{eq:dev-tube}.
Then, for each $s\in \R$, 
there exists a point $\vect{x}\in S^{n-1}$
such that $p=(s,\vect{x})\in \R\times S^{n-1}$ is a singular point of $f$.
\end{lemma}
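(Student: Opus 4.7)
The plan is to compute the differential $df_{(s,\vect{x})}$ in the moving orthonormal frame $\{\gamma(s),\gamma'(s),\vect{e}_1(s),\dots,\vect{e}_n(s)\}$ of $\R^{n+2}$, reduce the rank-drop condition to a single linear equation in $\vect{x}$, and then observe that this equation is always solvable on $S^{n-1}$.

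First, I would decompose each $\vect{e}_i'(s)$ in this basis. From $\inner{\vect{e}_i}{\gamma}\equiv 0$ and $\inner{\vect{e}_i}{\gamma'}\equiv 0$, differentiating gives $\inner{\vect{e}_i'}{\gamma}=-\inner{\vect{e}_i}{\gamma'}=0$, so the $\gamma$-component vanishes. Setting $\beta_i(s):=\inner{\vect{e}_i'(s)}{\gamma'(s)}$ and $\omega_{ij}(s):=\inner{\vect{e}_i'(s)}{\vect{e}_j(s)}$, the identity $\inner{\vect{e}_i}{\vect{e}_j}=\delta_{ij}$ forces $\omega_{ij}$ to be antisymmetric in $i,j$, and one obtains
\[
  \vect{e}_i'(s)=\beta_i(s)\,\gamma'(s)+\sum_{j=1}^{n}\omega_{ij}(s)\,\vect{e}_j(s).
\]

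Next, I would evaluate the differential at $(s,\vect{x})$. For $\vect{v}\in T_{\vect{x}}S^{n-1}$, $df(\vect{v})=\sum_i v_i\vect{e}_i(s)$, so the $S^{n-1}$-directions span the $(n-1)$-plane $V_{\vect{x}}:=\{\sum_i v_i\vect{e}_i(s)\,;\,\vect{v}\cdot\vect{x}=0\}$, while
\[
  df(\partial_s)=\sum_i x_i\vect{e}_i'(s)=\Bigl(\sum_i x_i\beta_i(s)\Bigr)\gamma'(s)+\sum_{j}\Bigl(\sum_i x_i\omega_{ij}(s)\Bigr)\vect{e}_j(s).
\]
The crucial observation is that the $\vect{e}_j$-part of $df(\partial_s)$ lies in $V_{\vect{x}}$, since its inner product with $\vect{x}=\sum_k x_k\vect{e}_k$ equals $\sum_{i,j}x_ix_j\omega_{ij}$ and vanishes by antisymmetry. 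Consequently ${\rm rank}(df)_{(s,\vect{x})}=n$ if and only if $\sum_i x_i\beta_i(s)\neq 0$, so $(s,\vect{x})$ is a singular point of $f$ if and only if $\vect{x}$ is orthogonal to $\vect{\beta}(s):=(\beta_1(s),\dots,\beta_n(s))\in\R^n$.

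Finally, for any fixed $s\in\R$, the set $\{\vect{x}\in S^{n-1}\,;\,\vect{x}\cdot\vect{\beta}(s)=0\}$ is non-empty: it is all of $S^{n-1}$ when $\vect{\beta}(s)=0$, and otherwise (assuming $n\geq 2$) a great $(n-2)$-sphere inside $S^{n-1}$. This produces the desired singular point in every $s$-slice. I do not expect a serious obstacle; the only subtle step is the antisymmetry argument eliminating the $\vect{x}$-component of $df(\partial_s)$, which is what reduces the rank-drop condition to a single linear equation on $\vect{x}$.
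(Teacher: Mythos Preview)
Your argument is correct and reaches the same singular-set description as the paper, namely that $(s,\vect{x})$ is singular if and only if $\vect{x}$ is orthogonal to a fixed vector in $\R^n$ depending only on $s$. The difference lies in the choice of frame. The paper replaces the given orthonormal frame $\{\vect{e}_1,\dots,\vect{e}_n\}$ by a \emph{Bishop frame} of $\gamma$, for which $\vect{e}_j'=-\mu_j\gamma'$; in your notation this forces $\omega_{ij}\equiv 0$, so the $\vect{e}_j$-part of $df(\partial_s)$ vanishes outright and the rank computation is immediate. You instead keep the arbitrary frame from the definition and eliminate the $\omega_{ij}$-terms \emph{a posteriori} via the antisymmetry identity $\sum_{i,j}x_ix_j\omega_{ij}=0$, which shows that the $\vect{e}_j$-part already lies in the image of the $S^{n-1}$-directions. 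Your route avoids invoking the existence of a Bishop frame and works directly with the frame handed to you; the paper's route makes the formula for $df$ simpler and also yields $\kappa_\gamma=|\vect{\mu}|$, which is used later in the paper. Either way the proof is short, and the final case split ($\vect{\beta}(s)=0$ versus $\vect{\beta}(s)\neq 0$) is identical.
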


\begin{proof}
Set $\vect{e}:=\gamma'$.
Let $\mathcal{F}=\{\gamma, \vect{e}, \vect{e}_1, \vect{e}_2,\cdots, \vect{e}_n\}$
be a {\it Bishop frame} \cite{Bishop} of $\gamma$.
That is, there exist smooth functions $\mu_j=\mu_j(s)$\, $(j=1,\cdots, n)$ such that
\[
  \vect{e}'= -\gamma +\mu_1 \vect{e}_1 +\cdots + \mu_n \vect{e}_n,\qquad
  \vect{e}_j'=-\mu_j \vect{e}
\]
for $j=1,\cdots, n$.
Remark that the curvature function $\kappa_{\gamma}=\kappa_{\gamma}(s)$
of $\gamma(s)$ is given by 
\begin{equation}\label{eq:curvature_gamma}
  \kappa_{\gamma}=\sqrt{(\mu_1)^2+\cdots+(\mu_n)^2}.
\end{equation}
Differentiating \eqref{eq:dev-tube}, we have
\begin{equation}\label{eq:differential-f}
  df=-(x_1\mu_1+\cdots+x_n\mu_n)\vect{e}\,ds
  +\vect{e}_1dx_1+\cdots \vect{e}_ndx_n.
\end{equation}
Hence,
the first fundamental form $ds^2$ is calculated as
\begin{align*}
  ds^2=\inner{df}{df}
  &=(x_1\mu_1+\cdots+x_n\mu_n)^2ds^2
       +(dx_1)^2+\cdots +(dx_n)^2\\
  &=(x_1\mu_1+\cdots+x_n\mu_n)^2ds^2+g_{S^{n-1}},
\end{align*}
where $g_{S^{n-1}}$
is the standard metric of $S^{n-1}$.
The singular point set $S_f$ is given by
\[
  S_f=\left\{ (s,\vect{x})\in \R\times S^{n-1} \,;\, x_1\mu_1(s)+\cdots+x_n\mu_n(s)=0 \right\}.
\]
At a singular point, $f_s=0$ holds.
By \eqref{eq:curvature_gamma},
we have
\[
  S_f^I:=\left\{ (s,\vect{x})\in \R\times S^{n-1} \,;\, \kappa_{\gamma}(s)=0 \right\}
\]
is a subset of $S_f$.
Hence, if $\kappa_{\gamma}(s)=0$ holds at $s\in \R$,
$(s,\vect{x})\in S_f^I$ holds for any point $\vect{x}\in S^{n-1}$.
For a singular point $(s,\vect{x})\in S_f\setminus S_f^I$,
set $\hat{\mu}(s)=(\hat\mu_1(s),\cdots,\hat\mu_n(s))$,
where $\hat\mu_j(s)=\mu_j(s)/\kappa_{\gamma}(s)$ for $j=1,\cdots, n$.
Then we have $S_f=S_f^I\cup S_f^{N\!I}$, where
\[
  S_f^{N\!I}:=\left\{ (s,\vect{x})\in \R\times S^{n-1} 
  \,;\, \inner{\hat{\mu}(s)}{\vect{x}}=0 \right\}.
\]
Thus, if $\kappa_{\gamma}(s)\neq0$ holds at $s\in \R$,
there exists $\vect{x}\in S^{n-1}$
such that $\inner{\hat{\mu}(s)}{\vect{x}}=0$,
and hence $(s,\vect{x})\in S_f^{N\!I}$.
This completes the proof.
\end{proof}

By the above proof,
it also holds that
every singular point $p\in\R\times S^{n-1}$ 
is of corank one
$($i.e., ${\rm rank}(df)_p = n-1)$.

\begin{proposition}\label{prop:dev-tube}
A developable tube 
$f : \R\times S^{n-1}\rightarrow S^{n+1}$
is an umbilic-free weakly complete constant curvature $1$ front.
If $f$ is complete, then the center curve $\gamma$ is closed.
Moreover, $f$ is non-co-orientable 
if and only if $\gamma$ is antiperiodic.
\end{proposition}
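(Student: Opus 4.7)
The plan is to exploit the explicit parametrization \eqref{eq:dev-tube} together with the Bishop-frame machinery from the proof of Lemma~\ref{lem:dev-tube}. The key observation is that the center curve itself serves as a unit normal: setting $\nu(s,\vect{x}):=\gamma(s)$, orthogonality $\inner{\nu}{f}=\inner{\nu}{df}=0$ follows from $\inner{\gamma}{\vect{e}_i}=\inner{\gamma}{\vect{e}}=0$ together with \eqref{eq:differential-f}, and differentiating gives $d\nu=\vect{e}\,ds$, so ${\rm rank}(d\nu)\leq 1$; Lemma~\ref{lem:degenerate-nu} then yields constant sectional curvature $1$. The front and umbilic-free properties reduce to inspecting $dL=(df,d\nu)$ and the relation $\delta_1(df)_p=\delta_2(d\nu)_p$ against the coordinate directions $\partial_s,\partial_{x_i}$: since $d\nu(\partial_s)=\vect{e}\neq 0$ while $d\nu(\partial_{x_i})=0$ and the vectors $df(\partial_{x_i})=\vect{e}_i$ are linearly independent, one simultaneously obtains injectivity of $dL$ and is forced to $(\delta_1,\delta_2)=(0,0)$, so $f$ is a front with no umbilic points.

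Weak completeness follows immediately upon combining $\inner{df}{df}=(x_1\mu_1+\cdots+x_n\mu_n)^2 ds^2+g_{S^{n-1}}$ from the proof of Lemma~\ref{lem:dev-tube} with $\inner{d\nu}{d\nu}=ds^2$, which gives
\[
  ds^2_{\#}=\bigl(1+(x_1\mu_1+\cdots+x_n\mu_n)^2\bigr)ds^2+g_{S^{n-1}}\geq ds^2+g_{S^{n-1}},
\]
whose right-hand side is the complete product metric on $\R\times S^{n-1}$. For the implication that completeness of $f$ forces $\gamma$ to be closed, I would argue by contradiction using Lemma~\ref{lem:sing_compact}: the proof of Lemma~\ref{lem:dev-tube} shows that for every $s\in\R$ there exists $\vect{x}\in S^{n-1}$ with $(s,\vect{x})\in S_f$, so the projection $S_f\to\R$ onto the first factor is surjective. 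If $\gamma$ were not closed, the domain $\R\times S^{n-1}$ would be non-compact along $s$, and $S_f$ would project onto the non-compact $\R$, preventing $S_f$ from being compact and contradicting Lemma~\ref{lem:sing_compact}.

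The co-orientability equivalence requires the most care and is where I expect the main obstacle. For the direction ``$\gamma$ antiperiodic $\Rightarrow f$ non-co-orientable,'' when $\gamma(s+L/2)=-\gamma(s)$ the Bishop-frame ODEs from the proof of Lemma~\ref{lem:dev-tube} force parallel translation over a half-period to carry the frame by some orthogonal matrix $\sigma\in\mathrm{O}(n)$, i.e.\ $\vect{e}_i(s+L/2)=\sum_j\sigma_{ij}\vect{e}_j(s)$. Thus $f(s+L/2,\vect{x})=f(s,\sigma^{\top}\vect{x})$ and $f$ descends to the quotient of $\R\times S^{n-1}$ by $(s,\vect{x})\sim(s+L/2,\sigma\vect{x})$; but $\nu=\gamma$ flips sign under this identification, so $\nu$ does not descend and $f$ is non-co-orientable. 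Conversely, if $f$ is non-co-orientable, passing to the orientation double cover produces a lift on which $\nu=\gamma$ is globally defined and the lifted center curve has twice the period of the original quotient, which forces $\gamma$ to be antiperiodic.
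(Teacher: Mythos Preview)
Your argument is correct and follows the same route as the paper, which is simply terser: it reads off both the front property and weak completeness from the single lift-metric bound $ds^2_\#\geq ds^2+g_{S^{n-1}}$, deduces umbilic-freeness directly from ${\rm rank}(d\nu)=1$, and for the third assertion just observes that $\gamma$ is the unit normal. Your explicit quotient discussion for the co-orientability equivalence is a reasonable elaboration of that one-line remark (though the cover you invoke is the \emph{co-orientation} double cover, not the orientation one).
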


\begin{proof}
By \eqref{eq:differential-f},
we have that $\nu(s,\vect{x}):=\gamma(s)$
gives the unit normal vector field along $f$.
Thus, ${\rm rank}(d\nu)=1$ holds
and $f$ is a constant curvature $1$ frontal
by Lemma \ref{lem:degenerate-nu}.
Then, the lift metric $ds^2_{\#}$ satisfies
\[
  ds^2_{\#}=ds^2+\inner{d\nu}{d\nu}
  =\left( (x_1\mu_1+\cdots+x_n\mu_n)^2+1\right)ds^2+g_{S^{n-1}}
  \geq ds^2+g_{S^{n-1}},
\]
which implies that $f$ is a front and weakly complete.
Moreover, since ${\rm rank}(d\nu)=1$,
$f$ has no umbilic points.
Assume that $f$ is complete.
If $\gamma(s)$ is not closed,
by Lemma \ref{lem:dev-tube},
$S_f$ is not closed.
By Lemma \ref{lem:sing_compact},
the singular point set $S_f$ of $f$ is compact.
Therefore, $\gamma(s)$ must be closed.
The third assertion follows because $\gamma$
gives the unit normal of $f$.
\end{proof}

\subsection{Proof of Theorem \ref{thm:main}}
Now, we give a proof of Theorem \ref{thm:main}.
In the following lemmas
(Lemma \ref{lem:sp_coord} and Lemma \ref{lem:Massey}),
we fix a non-totally-geodesic
constant curvature $1$ front
$f:M^n\rightarrow S^{n+1}$.
First, assume $f$ is co-orientable.
Let $\nu : M^n \rightarrow S^{n+1}$ 
be a (globally defined) unit normal vector field.
By Lemma \ref{lem:degenerate-nu},
${\rm rank}(d\nu)\leq 1$.
Thus, a point $p\in M^n$ is umbilic
if and only if $(d\nu)_p=0$.
We denote by $\mathcal{U}_f$ the umbilic point set of $f$.

\begin{lemma}\label{lem:sp_coord}
A singular point $p\in M^n$ of $f$ is of corank one.
Moreover, around any non-umbilic point
$q\in M^n\setminus\mathcal{U}_f$,
there exist a local coordinate system
$(U\,;\,u_1,\cdots,u_n)$ 
and a smooth function $\rho=\rho(u_1,\cdots,u_n)$ on $U$ such that
\begin{equation}\label{eq:sp_coord}
  -\rho \nu_{u_1}=f_{u_1},\qquad
  \nu_{u_j}=0,\qquad
  \inner{\nu_{u_1}}{f_{u_j}}=0
\end{equation}
hold for each $j=2,\cdots, n$ and $\{\nu_{u_1}, f_{u_2},\cdots , f_{u_n}\}$ 
is a frame on $U$.
Setting $U_{s}=U\cap \{u_1=s\}$,
the restriction $f|_{U_{s}} : U_{s} \rightarrow S^{n-1}$ is 
a totally geodesic embedding for each $s$.
\end{lemma}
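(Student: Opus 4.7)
The plan exploits two facts: Lemma~\ref{lem:degenerate-nu} gives $\operatorname{rank}(d\nu)\le 1$, and the front condition forces $\ker(df)_p\cap\ker(d\nu)_p=\{0\}$ at every $p$ (since the Legendrian lift $L=(f,\nu)$ is an immersion). Together these imply $\dim\ker(df)_p\le 1$, so $\operatorname{rank}(df)_p\ge n-1$; at a singular point the rank is also strictly less than $n$, so it equals $n-1$. This settles the corank-one assertion.

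Now fix $q\notin\mathcal{U}_f$. On a neighborhood $U$ of $q$, $\operatorname{rank}(d\nu)\equiv 1$, so $\ker(d\nu)$ is an integrable rank-$(n-1)$ distribution (the kernel of a constant-rank map). By Frobenius I would pick coordinates $(u_1,\dots,u_n)$ whose leaves $\{u_1=\mathrm{const}\}$ coincide with those of $\ker(d\nu)$; then $\nu_{u_j}=0$ for $j\ge 2$. Differentiating the frontal identity $\inner{f_{u_j}}{\nu}=0$ in $u_1$, together with the symmetric identity obtained by differentiating $\inner{f_{u_1}}{\nu}=0$ in $u_j$, and using $\nu_{u_j}=0$, yields $\inner{f_{u_j}}{\nu_{u_1}}=0$ for $j\ge 2$. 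Since the Legendrian condition makes $df$ injective on $\ker(d\nu)$, the vectors $f_{u_2},\dots,f_{u_n}$ are linearly independent; as $\nu_{u_1}\ne 0$ is orthogonal to them within the $n$-plane $\{f,\nu\}^{\perp}\subset\R^{n+2}$, the set $\{\nu_{u_1},f_{u_2},\dots,f_{u_n}\}$ is a local frame there.

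It remains to align $f_{u_1}$ with $\nu_{u_1}$. Expanding $f_{u_1}=-\rho\,\nu_{u_1}+\sum_{j\ge 2}\beta_j f_{u_j}$ in this frame produces smooth functions $\rho$ and $\beta_j$. I would then set $\widetilde X:=\partial_{u_1}-\sum_{j\ge 2}\beta_j\partial_{u_j}$, which is transverse to $\ker(d\nu)$ and satisfies $df(\widetilde X)=-\rho\,\nu_{u_1}$, $d\nu(\widetilde X)=\nu_{u_1}$. Flowing $\widetilde X$ out of a single leaf $U_{s_0}$ defines new coordinates $(\tilde u_1,\dots,\tilde u_n)$ with $\partial_{\tilde u_1}=\widetilde X$; because this flow carries leaves of $\ker(d\nu)$ to leaves, each $\partial_{\tilde u_j}$ $(j\ge 2)$ remains in $\ker(d\nu)$. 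Relabeling, all the stated relations hold in the new coordinates. This step---producing a transversal whose $f$-image is everywhere proportional to $\nu_{u_1}$---is the main obstacle, since Frobenius-adapted coordinates alone need not align $f_{u_1}$ with $\nu_{u_1}$, and one must check that the additional flow construction preserves the leaf foliation.

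For the final assertion, differentiating $\inner{f}{\nu}=0$ in $u_1$ gives $\inner{f}{\nu_{u_1}}=-\inner{f_{u_1}}{\nu}=0$. On each leaf $U_s$ the vectors $\nu(s)$ and $\nu_{u_1}(s)$ are constant in $\R^{n+2}$, so
\[
  f(U_s)\subset S^{n+1}\cap\nu(s)^{\perp}\cap\nu_{u_1}(s)^{\perp},
\]
which is a totally geodesic $(n-1)$-sphere of $S^{n+1}$. Because $f|_{U_s}$ is an immersion of an $(n-1)$-manifold into this $(n-1)$-sphere (again by the Legendrian condition), it is a local diffeomorphism, and shrinking $U$ if necessary gives an embedding onto its image.
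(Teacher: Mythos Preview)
Your argument is correct, but the route to the special coordinates differs from the paper's. The paper's device is to pass to a parallel front $f^{\delta}=(\cos\delta)f+(\sin\delta)\nu$, which is an \emph{immersion} near any non-umbilic point; its principal curvatures are $\lambda^{\delta}$ in one direction and the constant $\tan\delta$ in the remaining $n-1$ directions, so one simply takes curvature-line coordinates for $f^{\delta}$ and reads off \eqref{eq:sp_coord} with $\rho=(\lambda^{\delta}\sin\delta+\cos\delta)/(\lambda^{\delta}\cos\delta-\sin\delta)$. You instead work directly with $f$: the constant-rank theorem for $\nu$ gives $\nu_{u_j}=0$ for $j\ge 2$, the symmetry of the second fundamental form yields $\inner{\nu_{u_1}}{f_{u_j}}=0$, and a further flow-box along $\widetilde X=\partial_{u_1}-\sum_j\beta_j\partial_{u_j}$ aligns $f_{u_1}$ with $\nu_{u_1}$ (this flow does carry leaves to leaves, since $\widetilde X(u_1)\equiv 1$). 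For the totally geodesic assertion the paper shows the unit normal $\vect{n}=\nu_{u_1}/|\nu_{u_1}|$ of $f|_{U_s}$ satisfies $\vect{n}_{u_j}=0$; your observation that $\nu$ and $\nu_{u_1}$ are literally constant in $\R^{n+2}$ along $U_s$ (because $\nu_{u_1u_j}=(\nu_{u_j})_{u_1}=0$) is the same computation recast as ``the image lies in a fixed linear $n$-plane''. The paper's approach is slicker in that the existence of curvature-line coordinates away from umbilics is classical and no flow adjustment is needed; yours is more self-contained and avoids invoking the parallel family.
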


\begin{proof}
Let $(V\,;\,v_1,\cdots,v_n)$ be a local coordinate system
such that $\nu_{v_j}=0$ for each $j=2,\cdots, n$.
Since $f$ is a front,
$f_{v_j}\neq0$ for each $j=2,\cdots, n$.
Hence $p\in V$ is a singular point 
if and only if $f_{v_1}(p)=0$, and then 
$\{\nu_{v_1},\, f_{v_2},\cdots,\, f_{v_n}\}$ is linearly independent.
In this case, for each $\delta\in (0,\pi/2)$,
the parallel front $f^{\delta}$ (cf.\ \eqref{eq:parallel}) is an immersion around $p$.
Then, we have
\[
  (f^{\delta})_{v_j}=(\cos \delta)f_{v_j},\qquad
  (\nu^{\delta})_{v_j}=-(\sin \delta)f_{v_j}  
\]
for each $j=2,\cdots, n$.
Thus, the principal curvatures $\lambda_1^\delta,\cdots,\lambda_n^\delta$
of $f^{\delta}$ are given by
$\lambda_1^\delta=\lambda^{\delta}$, $\lambda_j^\delta=\tan \delta$
$(j=2,\cdots,n)$ for some function $\lambda^{\delta}$ on $V$.
Let $(U\,;\,u_1,\cdots,u_n)$ be a curvature line coordinate system of 
$f^{\delta}$ around $q\in M^n\setminus\mathcal{U}_f$.
That is,
\[
  -(\nu^{\delta})_{u_1}=\lambda^{\delta}(f^{\delta})_{u_1},\qquad
  -(\nu^{\delta})_{u_j}=(\tan \delta)(f^{\delta})_{u_j}
\]
and $\inner{(f^{\delta})_{u_1}}{(f^{\delta})_{u_j}}=0$
hold for $j=2,\cdots, n$. 
Since $\lambda^{\delta}\,(\neq \tan \delta)$ is smooth on $U$
and \eqref{eq:parallel}, we have \eqref{eq:sp_coord}
with $\rho=(\lambda^{\delta}\sin \delta+\cos \delta)/(\lambda^{\delta}\cos \delta-\sin \delta)$.
With respect to the third assertion,
$\vect{n}:=\nu_{u_1}/|\nu_{u_1}|$ gives a unit normal vector field of $f|_{U_s}$.
Then, 
\[
  \vect{n}_{u_j}= 
  \left(\frac{1}{|\nu_{u_1}|}\right)_{u_j}\nu_{u_1}+\frac{1}{|\nu_{u_1}|}\nu_{u_1u_j}
  =|\nu_{u_1}| \left(\frac{1}{|\nu_{u_1}|}\right)_{u_j} \vect{n}
\]
and $\inner{\vect{n}}{\vect{n}_{u_j}}=0$ yield
$\vect{n}_{u_j}=0$ on $U_{s}$ for each $j=2,\cdots,n$.
Together with $\nu_{u_j}=0$ $(j=2,\cdots,n)$ on $U_{s}$,
we have the conclusion.
\end{proof}

On the regular point set, the principal curvatures 
$\lambda_1,\cdots,\lambda_n$ of $f$
are given by $\lambda_1=1/\rho$, $\lambda_j=0$ $(j=2,\cdots,n)$.
We call $\rho$ and $\Lambda:=[1:\rho] : U \rightarrow P^1(\R)$ 
the {\it curvature radius function} and
the {\it principal curvature map}, respectively as in \cite{MU},
where $P^1(\R)$ is the real projective line.
We may regard $\Lambda$ as the proportional ratio 
$\Lambda=[1:\rho]=[-\nu_{u_1}:f_{u_1}]$.
Then, in our setting, 
a point $p\in M^n$ is singular (resp.\ umbilic) if and only if 
$\Lambda=[1:0]$ (resp.\ $\Lambda=[0:1]$).
By the proof above, the principal curvature map $\Lambda$ 
can be continuously extended to the whole of $M^n$ 
(cf.\ \cite[Proposition 1.6]{MU}).

Let $I$ be an open interval.
A curve $\sigma=\sigma(t) : I \rightarrow M^n$
is called {\it asymptotic},
if $d\nu(\sigma'(t))=0$ holds for each $t\in I$, where $\sigma':=d\sigma/dt$.
If $f$ is a front, $df(\sigma'(t))$ does not vanish.
Thus, we can parametrize $\sigma(t)$
by an arclength parameter $t$, 
i.e., $\inner{df(\sigma'(t))}{df(\sigma'(t))}=1$ for each $t\in I$.

\begin{lemma}\label{lem:Massey}
Let $\sigma(t) : I \rightarrow M^n$
be an asymptotic curve parametrized by arclength 
passing through a non-umbilic point 
$q=\sigma(0)\in M^n\setminus\mathcal{U}_f$.
Then the restriction of the curvature radius function $\rho$ 
to $\sigma(t)$ satisfies $\rho''(t)=\rho(t)$ for each $t\in I$.
Moreover, the closure of $\sigma(I)$ and $\mathcal{U}_f$ 
do not intersect.
\end{lemma}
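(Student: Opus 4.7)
The plan is to work in the local coordinates $(U; u_1, \ldots, u_n)$ provided by Lemma \ref{lem:sp_coord} around $q$, exploiting that $\nu$ depends only on $u_1$. The asymptotic condition $d\nu(\sigma'(t)) = 0$ then forces $\dot u_1(t) \equiv 0$, so $u_1 \circ \sigma$ is constant, say $\equiv s_0$; set $N := \nu(s_0)$. Since $\nu \circ \sigma \equiv N$, the third assertion of Lemma \ref{lem:sp_coord} places the curve $F(t) := f(\sigma(t))$ inside the totally geodesic great $(n-1)$-sphere $\Sigma_{s_0} \subset S^{n+1}$ that contains $f(U_{s_0})$.

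Next I would represent $\rho$ along $\sigma$ via the ambient $\R^{n+2}$-geometry. Reparametrize $u_1 \mapsto \nu(u_1)$ by arclength $\tau$ near $q$; the relation $f_{u_1} = -\rho\, \nu_{u_1}$ becomes $f_\tau = -\rho\, \nu_\tau$ with $|\nu_\tau|=1$. Write $\nu_{\tau\tau} = -\nu + \vect{\kappa}(\tau)$, where $\vect{\kappa} \perp \nu, \nu_\tau$ encodes the ambient geodesic acceleration of the normal curve in $S^{n+1}$. A short computation from $\inner{f}{\nu}=0$, $|f|^2 = 1$, and the expansion $f_{\tau\tau} = \rho\nu - \rho_\tau \nu_\tau - \rho\vect{\kappa}$ gives the key identity
\[
\rho(t) = \inner{F(t)}{\vect{\kappa}(s_0)},
\]
so $\rho$ along $\sigma$ is the pull-back by $F$ of a fixed linear functional on $\R^{n+2}$.

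To extract the ODE, I would differentiate twice: $\rho''(t) = \inner{F''(t)}{\vect{\kappa}(s_0)}$. Since $F \in S^{n+1}$ is arclength-parametrized, $F'' = -F + T$ with $T$ the intrinsic spherical acceleration; and since $F$ stays in $\Sigma_{s_0}$ while $N$ and $\nu_\tau(s_0)$ are fixed, the identities $\inner{F''}{N} = \inner{F''}{\nu_\tau(s_0)} = 0$ force $T$ to be tangent to $\Sigma_{s_0}$. Combining these facts with the totally geodesic structure of $\Sigma_{s_0}$ and the Gauss equation \eqref{eq:Gauss} specialized to $c = 1$ should produce the desired identity $\rho''(t) = \rho(t)$. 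The main obstacle is the sign and index bookkeeping in this step, particularly the tangential contribution $\inner{T}{\vect{\kappa}(s_0)}$.

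For the ``moreover'' statement, suppose for contradiction that $\sigma(t_n) \to p \in \mathcal{U}_f$ for some sequence $t_n \in I$. Continuity of $f$ gives $F(t_n) \to f(p)$, so by the identity above, $\rho(t_n) \to \inner{f(p)}{\vect{\kappa}(s_0)}$, a finite real number. However, at the umbilic point $p$ the unique nonzero principal curvature $\lambda_1 = 1/\rho$ of $f$ vanishes, forcing $\rho(t_n) \to \infty$ along any approaching sequence — a contradiction. Hence $\overline{\sigma(I)} \cap \mathcal{U}_f = \emptyset$.
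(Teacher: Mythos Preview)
Your setup through the identity $\rho(t)=\langle F(t),\vect{\kappa}(s_0)\rangle$ is correct and elegant, and your treatment of the ``moreover'' clause matches the paper's. The gap is exactly where you flag it: the term $\langle T,\vect{\kappa}(s_0)\rangle$ is \emph{not} mere bookkeeping. Writing $F'' = -F + T$, the vector $\vect{\kappa}(s_0)=\nu_{\tau\tau}(s_0)+\nu(s_0)$ lies in the $n$-plane $\Pi_{s_0}=\{\nu(s_0),\nu_\tau(s_0)\}^\perp$, and so does $T$ (it is tangent to the leaf $\Sigma_{s_0}\subset\Pi_{s_0}$). Nothing forces $T\perp\vect{\kappa}(s_0)$: for a generic unit-speed curve $\sigma$ in the leaf $U_{s_0}$ (and for $n\geq 3$ every such curve is asymptotic), $T$ is the nonzero geodesic curvature vector of $F$ inside $\Sigma_{s_0}$, and $\langle T,\vect{\kappa}(s_0)\rangle$ has no reason to vanish. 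The Gauss equation you invoke constrains the curvature tensor, not the acceleration of an individual curve, so it cannot close this gap.

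What actually makes the argument work is that the lemma is tacitly about \emph{geodesic} asymptotic curves. The paper picks the coordinates of Lemma~\ref{lem:sp_coord} so that the $u_n$-curves are geodesics of the leaf $U_0$ and then computes along the $u_n$-curve through $q$; since $f|_{U_0}$ is a totally geodesic embedding into $S^{n+1}$, the image $F$ is a great circle, $F''=-F$, and hence $T=0$. In your language this immediately gives $\rho''=-\rho$. (The paper's displayed equation $\rho''=\rho$ carries a sign slip; note that its own general solution $a\cos t+b\sin t$ solves $\rho''=-\rho$.) The paper's mechanism is more direct---differentiate $f_{u_1}=-\rho\,\nu_{u_1}$ twice in $u_n$ using the great-circle equation and equate mixed partials---but once you supply the missing hypothesis $T=0$, your linear-functional representation is a clean alternative and even yields the boundedness needed for the second assertion without first solving the ODE.
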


\begin{proof}
Let $(U\,;\,u_1,\cdots,u_n)$
be a coordinate system centered at $q=(0,\cdots,0)$ 
as in Lemma \ref{lem:sp_coord}.
Moreover, by Lemma \ref{lem:sp_coord},
we may assume that 
each $u_n$-curve is a geodesic of 
$U_0=U\cap\{u_1=0\}$.
Lemma \ref{lem:sp_coord}
yields that $f_{u_n u_n}=f$ on $U$.
Thus, by Lemma \ref{lem:sp_coord}, 
$  (f_{u_n u_n})_{u_1}=f_{u_1}=-\rho\, \nu_{u_1}$
holds.
On the other hand, we have
$  (f_{u_1})_{u_nu_n}
  =(-\rho\, \nu_{u_1})_{u_nu_n}
  =-\rho_{u_nu_n} \nu_{u_1}.$
Since $(f_{u_n u_n})_{u_1}=(f_{u_1})_{u_nu_n}$ and $\nu_{u_1}\neq0$,
we have $\rho_{u_nu_n}=\rho$,
which proves the first assertion.
The general solution is
$
  \rho(t)=a\cos t+b\sin t
$
with constants $a$, $b\in\R$.
Therefore, we have
\begin{equation}\label{eq:Lmd1}
  \Lambda(\sigma(t))=\left[ 1 : a\cos t+b\sin t \right].
\end{equation}
Suppose that $\sigma(I)$ accumulates at an umbilic point $q\in \partial\mathcal{U}_f$.
Since $\sigma$ passes through a non-umbilic point $q$, 
there exists a sequence $\{p_m\}\subset\sigma(I)\cap(M^2\setminus \mathcal{U}_f)$
such that $\lim_{m\rightarrow \infty}p_m=q$.
Let $\{t_m\}$ be a sequence such that $p_m=\sigma(t_m)$ for each $m$.
Then, there exist constants $a,\,b$ such that $\rho(p_m)=a\cos t_m+b\sin t_m$.
In particular, $\rho$ is bounded.
On the other hand, $\Lambda=[1:\rho]=[1/\rho:1]$ 
implies that $\rho(p_m)$ must diverge as $m\rightarrow\infty$,
which is a contradiction.
Hence, we may conclude that 
the closure of $\sigma(I)$ and $\mathcal{U}_f$ have no intersection.
\end{proof}

\begin{theorem}\label{thm:w-complete}
Let $f : M^n \rightarrow S^{n+1}$ 
be a weakly complete co-orientable constant curvature $1$ front.
If $f$ is not totally geodesic, 
there exists a complete regular curve in $\gamma(s)$ in $S^{n+1}$
such that $f$ is a developable tube of $\gamma(s)$
given by \eqref{eq:dev-tube}.
\end{theorem}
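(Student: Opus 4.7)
The strategy is to extract the center curve $\gamma$ intrinsically from the Gauss map $\nu$, and then reconstruct $f$ globally as the tube \eqref{eq:dev-tube} over that curve. First I would show that the umbilic set $\mathcal{U}_f$ of $f$ is empty. Since $f$ is not totally geodesic, the non-umbilic locus $V := M^n \setminus \mathcal{U}_f$ is open and non-empty; Lemma \ref{lem:Massey} forces $\rho(t) = a\cos t + b\sin t$ along any asymptotic curve in $V$, so the boundedness of $\rho$ prevents the principal curvature map $\Lambda = [1:\rho]$ from ever approaching the umbilic value $[0:1]$. Combining this with weak completeness of $ds^2_\#$ (which lets us extend asymptotic curves indefinitely), one sees that $V$ is both open and closed in $M^n$, so $V = M^n$ by connectedness.

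Once $\mathcal{U}_f = \emptyset$, the distribution $\mathcal{D} := \ker d\nu$ has constant rank $n-1$ and is integrable by Lemma \ref{lem:sp_coord}, with each leaf $L$ realized by $f$ as a totally geodesic $(n-1)$-submanifold of $S^{n+1}$ along which $\nu$ is constant. Because $d\nu|_L = 0$, the lift metric $ds^2_\#|_L$ coincides with $f^{\ast}g_{S^{n+1}}|_L$, i.e., a piece of the round metric transferred via the totally geodesic embedding $f|_L$. Weak completeness of $ds^2_\#$ then forces every leaf $L$ to be a complete (hence compact) great $(n-1)$-sphere of $S^{n+1}$, diffeomorphic to $S^{n-1}$.

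Since all leaves are compact and of the same dimension, the leaf space $Q := M^n / \mathcal{D}$ is a connected Hausdorff $1$-manifold. Define $\gamma : Q \to S^{n+1}$ by $\gamma([p]) := \nu(p)$; this is well-defined because $\nu$ is constant on each leaf, and a regular immersion because $d\nu$ has rank $1$. Arclength-parametrizing yields a complete regular curve $\gamma : \R \to S^{n+1}$ (possibly closed). Over each $s \in Q$ the leaf lies inside the great $(n-1)$-sphere $\{x \in S^{n+1} : \inner{x}{\gamma(s)} = \inner{x}{\gamma'(s)} = 0\}$; choosing a Bishop-type orthonormal frame $\{\vec{e}_1(s), \ldots, \vec{e}_n(s)\}$ of the normal bundle of $\gamma$, each $f(p)$ with $p$ in the leaf over $s$ writes uniquely as $\sum_{i} x_i(p) \vec{e}_i(s)$ with $\vec{x}(p) \in S^{n-1}$, realizing $f$ in the form \eqref{eq:dev-tube}.

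I expect the main difficulty to lie in the first step, the elimination of umbilic points. Boundedness of $\rho$ along asymptotic curves obstructs $\mathcal{U}_f$ from being accumulated from within a single leaf, but ruling out umbilic points lying on the transverse direction between pieces of $V$ will require a careful limiting argument combining Lemma \ref{lem:sp_coord}, Lemma \ref{lem:Massey}, and weak completeness (in particular, one must show that as $p_m \to q \in \partial \mathcal{U}_f$ the associated leaves cannot degenerate while remaining compact great $(n-1)$-spheres). The remaining steps are essentially structural once the global foliation by complete great $(n-1)$-spheres is in hand.
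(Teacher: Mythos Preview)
Your overall plan matches the paper's: local tube structure from Lemma~\ref{lem:sp_coord}, completion of the asymptotic leaves via weak completeness of $ds^2_\#$, and identification of $\nu$ with a regular curve $\gamma$ so that $f$ becomes~\eqref{eq:dev-tube}. The one genuine gap is in your umbilic elimination. From ``$\rho(t)=a\cos t+b\sin t$ along each asymptotic curve'' you infer that $V=M^n\setminus\mathcal{U}_f$ is closed, but Lemma~\ref{lem:Massey} only prevents a \emph{single} asymptotic curve from accumulating on $\mathcal{U}_f$; it does not by itself rule out a boundary point $p\in\partial\mathcal{U}_f$ approached \emph{transversally} by $q_m\in V$ whose leaves all miss $p$. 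And your parenthetical mechanism (``leaves cannot degenerate'') is not the right one --- the leaves are all great $(n-1)$-spheres and do not degenerate; nothing contradictory comes from that.

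The paper supplies the missing idea and, in effect, reverses your ordering. One first shows, on $V$, that each asymptotic leaf is a \emph{full} great $(n-1)$-sphere (Lemma~\ref{lem:Massey} keeps the leaf inside $V$; weak completeness then makes it complete). Along any asymptotic geodesic in such a leaf, $\rho(t)=a\cos t+b\sin t$ traverses a full period and therefore \emph{vanishes} somewhere, so every leaf in $V$ carries a point with $\Lambda=[1:0]$, i.e.\ a singular point. Now for $q_m\to p\in\partial\mathcal{U}_f$, the image great circles $f\circ\sigma_m\subset S^{n+1}$ subconverge to a great circle through $f(p)$, and continuity of $\Lambda$ forces a point with $\Lambda=[1:0]$ on the limit curve. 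But that limit curve lies entirely in $\mathcal{U}_f$ (it meets $p$, and Lemma~\ref{lem:Massey} then forbids any non-umbilic point on it), so we obtain a singular umbilic point; by Lemma~\ref{lem:singular-umb} this forces $df=0$ there, hence $dL=0$ (since $d\nu=0$ at umbilic points of a constant curvature~$1$ front), contradicting the front condition. The crucial ingredient you are missing is thus that each complete asymptotic circle necessarily contains a singular point, and it is the persistence of that singular point in the limit --- not any leaf degeneration --- that yields the contradiction.
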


\begin{proof}
Let $(U\,;\,u_1,\cdots,u_n)$
be a coordinate system centered at $q=(0,\cdots,0)$ 
as in Lemma \ref{lem:sp_coord}.
On $U$, $\nu$ is a regular curve in $S^{n+1}$
and we may represent as $\nu(u_1,\cdots,u_n)=\gamma(u_1)$.
By Lemma \ref{lem:sp_coord}, for each $s$,
there exist a $n$-subspace $\Pi_s$ of $\R^{n+2}$
such that the image of $f|_{U_{s}}$
is included in $S^{n+1}\cap \Pi_s$.
Since $\Pi_s$ is perpendicular to both of 
$\gamma(s)$ and $\gamma'(s)$,
$\Pi_s$ is the normal space $(\gamma'(s))^{\perp}$ at $\gamma(s)$.
Thus, we have the image of $f|_{U_{s}}$
is given by the embedding 
$F_s : S^{n-1} \supset \Omega \rightarrow S^{n-1}$
defined as
$F_s(\vect{x})=x_1\vect{e}_1+\cdots+x_n\vect{e}_n$,
where $\{\vect{e}_1,\cdots, \vect{e}_n\}$ is an orthonormal basis
of $(\gamma'(s))^{\perp}$ at $\gamma(s)$
and $\vect{x}=(x_1,\cdots,x_n)\in \Omega \subset S^{n-1}$,
which implies $f$ is a developable tube given by \eqref{eq:dev-tube}
of $\gamma(u_1)$ on $U$.
As in Lemma \ref{lem:dev-tube},
the lift metric $ds^2_\#$ of $f$ is given by
$ds^2_\#=(\inner{f_{u_1}}{f_{u_1}}+\inner{\nu_{u_1}}{\nu_{u_1}})(d{u_1})^2+g_{S^{n-1}}$ on $U$,
where $g_{S^{n-1}}$ is the standard metric of $S^{n-1}$.
Since $f$ is weakly complete,
we have $\Omega=S^{n-1}$.

Suppose that the umbilic point set $\mathcal{U}_f$ of $f$ is not empty,
and take an umbilic point $p\in \partial\mathcal{U}_f$.
Then there exists a sequence 
$\{q_m\}\subset M^n\setminus\mathcal{U}_f$
such that $\lim_{m\rightarrow \infty}q_m=p$.
For each $q_m$,
we set $\sigma_m$ as an asymptotic curve passing through $q_m$.
Since $\hat\sigma_m:=f\circ \sigma_m$ is a great circle,
there exists a subsequence $\{m_k\}$
such that $\hat\sigma_q=\lim_{k\rightarrow\infty}\hat\sigma_{m_k}$ 
is also a great circle.
Since the inverse image $\sigma_q$ of $\hat\sigma_q$ through $f$
passes through $q$ and by Lemma \ref{lem:Massey}, 
every point on $\sigma_q$ is umbilic.
On the other hand,
for each $\sigma_{m_k}=\sigma_{m_k}(t)$, there exist $t_k$ such that 
$\Lambda_1(\sigma_{m_k}(t_k))=[1:0]$, by \eqref{eq:Lmd1}.
If we take the limit as $k\rightarrow \infty$, we have
$\sigma_q=\lim_{k\rightarrow\infty}\sigma_{m_k}$.
Therefore, by the continuity of the principal curvature map $\Lambda$,
there exists a point on $\sigma_q$
such that $\Lambda=[1:0] \,(\neq[0:1])$,
which implies there exists a singular point on $\mathcal{U}_f$.
However, by Lemma \ref{lem:singular-umb} and Lemma \ref{lem:sp_coord},
we have a contradiction and hence $\mathcal{U}_f$ is empty.
\end{proof}

\begin{proof}[{Proof of Theorem \ref{thm:main}}]
%
By a homothety, it suffices to consider the case of $c=1$ and
we may assume $f : M^n \rightarrow S^{n+1}$ 
is a complete constant curvature $1$ front.
Assume $f$ is not totally geodesic.

First, consider the case that $f$ is co-orientable.
Since the completeness implies the weak completeness by Fact \ref{fact:MU_complete},
Theorem \ref{thm:w-complete} implies that
$f$ is a tube of radius $\pi/2$ as in \eqref{eq:dev-tube}
whose center curve $\gamma=\gamma(s) : \R \rightarrow S^{n+1}$ is regular.
By Proposition \ref{prop:dev-tube}, the center curve $\gamma$ is closed.
Let $L>0$ be the period of $\gamma$.
We shall prove the orientability of $f$.
Take a positively oriented orthonormal frame 
$\mathcal{F}=\{\gamma,\,\gamma',\,\vect{e}_1,\cdots, \vect{e}_n\}$ of $\gamma$
such that $\vect{e}=\vect{e}_1\wedge \cdots \wedge \vect{e}_n\wedge \gamma$
(cf.\ \eqref{eq:wedge_prod}). 
If $f$ is not orientable,
there exists a real number $s_0$
such that the orientation of $\mathcal{F}(s_0+L)$
does not coincide with that of $\mathcal{F}(s_0)$.
Then, 
$\vect{e}(s_0+L)
=-\vect{e}_1(s_0+L)\wedge \cdots \wedge \vect{e}_n(s_0+L)\wedge \gamma(s_0+L)$
holds.
However, by the closedness of $\gamma$, we have 
$\vect{e}_1(s_0+L)\wedge \cdots \wedge \vect{e}_n(s_0+L)\wedge \gamma(s_0+L)
=\vect{e}_1(s_0)\wedge \cdots \wedge \vect{e}_n(s_0)\wedge \gamma(s_0)$,
and hence $\vect{e}(s_0+L)=-\vect{e}(s_0)$ holds.
Then
\[
  \gamma'(s_0+L) = \vect{e}(s_0+L) = - \vect{e}(s_0) = - \gamma'(s_0)
\]
yields $\gamma'(s_0)=0$, which contradicts the regularity of $\gamma(s)$.
Therefore, $f$ must be orientable.

In the case that $f$ is not co-orientable,
there exists a double covering $\Phi : \hat{M}^n \rightarrow M^n$
such that $\hat{f}:=f\circ \Phi$ is co-orientable.
Then, $\hat{f}$ is a co-orientable developable tube
whose unit normal $\nu=\gamma$ is closed.
Moreover, since $f$ is not co-orientable,
$\nu=\gamma$ is antiperiodic and regular by Proposition \ref{prop:dev-tube}.
Assume $M^n$ is non-orientable. 
Then, by the above argument, 
we have $\nu'(s_0)=0$ for some $s_0$, which is a contradiction. 
And hence $f$ must be orientable.
\end{proof}

\subsection{Caustic}
\label{sec:caustic}

For a non-totally-geodesic constant curvature $1$ front $f:M^n\rightarrow S^{n+1}$, 
the map
\begin{equation}\label{eq:caustic}
  f^C:=\frac{1}{\sqrt{1+\rho^2}}(f+\rho\nu) 
  : M^n\setminus \mathcal{U}_f \rightarrow S^{n+1}
\end{equation}
is called the {\it caustic} (or the {\it focal hypersurface}) of $f$,
where $\rho$ is the curvature radius function (cf.\ Lemma \ref{lem:sp_coord}).
We remark that, even if $f$ is non-co-orientable,
this definition is also well-defined.
Also note that caustics may not be connected.

\begin{proposition}\label{prop:caus-tube}
For a non-totally-geodesic constant curvature $1$ front in $S^{n+1}$, 
its caustic is also a constant curvature $1$ front
without umbilic points.
Moreover, if $f$ is a developable tube as in \eqref{eq:dev-tube}
of a regular curve $\gamma=\gamma(s)$,
the unit tangent vector $\vect{e}=\gamma'/|\gamma'|$ 
gives the center curve of $f^C$. 
\end{proposition}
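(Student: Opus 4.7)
The plan is to work locally around a non-umbilic point using Lemma \ref{lem:sp_coord}, which gives coordinates $(U;u_1,\ldots,u_n)$ with $f_{u_1}=-\rho\nu_{u_1}$, $\nu_{u_j}=0$ for $j\geq 2$, and $\{\nu_{u_1},f_{u_2},\ldots,f_{u_n}\}$ a frame; in particular $\nu$ depends only on $u_1$, so it can be viewed as a curve $\gamma(u_1)$ in $S^{n+1}$. Differentiating $f^C=(f+\rho\nu)/\sqrt{1+\rho^2}$ and exploiting the cancellation $f_{u_1}+\rho\nu_{u_1}=0$, I expect
\[
f^C_{u_1}=\frac{\rho_{u_1}}{(1+\rho^2)^{3/2}}(\nu-\rho f),\qquad
f^C_{u_j}=\frac{f_{u_j}}{\sqrt{1+\rho^2}}+\frac{\rho_{u_j}(\nu-\rho f)}{(1+\rho^2)^{3/2}}\quad (j\geq 2).
\]
The natural candidate for a unit normal along $f^C$ is $\nu^C:=\nu_{u_1}/|\nu_{u_1}|$; orthogonality to $f^C$ and to the image of $df^C$ follows from $\inner{\nu_{u_1}}{\nu}=\inner{\nu_{u_1}}{f}=0$ (obtained by differentiating $\inner{\nu}{\nu}=1$ and $\inner{f}{\nu}=0$) together with $\inner{\nu_{u_1}}{f_{u_j}}=0$ from Lemma \ref{lem:sp_coord}.

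Since $\nu^C$ depends only on $u_1$, I have ${\rm rank}(d\nu^C)\leq 1$, so Lemma \ref{lem:degenerate-nu} immediately yields constant sectional curvature $1$. For the front property I would verify injectivity of the Legendrian differential $(df^C,d\nu^C)$: a kernel vector $v=\sum v_i\partial_{u_i}$ satisfies $d\nu^C(v)=v_1\nu^C_{u_1}=0$, which forces $v_1=0$ once $\nu^C_{u_1}\neq 0$; then the $f_{u_j}$-components of $df^C(v)=0$ yield $v_j=0$ for $j\geq 2$ by the frame property. The same mechanism precludes umbilic points: an identity $\delta_1 df^C=\delta_2 d\nu^C$ restricted to $\partial_{u_j}$ $(j\geq 2)$ forces $\delta_1=0$ from the $f_{u_j}$-component, after which $\nu^C_{u_1}\neq 0$ forces $\delta_2=0$, contradicting $(\delta_1)^2+(\delta_2)^2\neq 0$.

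The main obstacle is therefore the global nonvanishing $\nu^C_{u_1}\neq 0$. I plan to verify it from the ambient sphere identity $\inner{\nu_{u_1 u_1}}{\nu}=-|\nu_{u_1}|^2\neq 0$ (differentiate $\inner{\nu}{\nu}=1$ twice), which prevents $\nu_{u_1 u_1}$ from being proportional to $\nu_{u_1}\perp\nu$; equivalently, in the Bishop frame along $\gamma=\nu$, the derivative $\vect{e}'=-\gamma+\sum\mu_i\vect{e}_i$ of the unit tangent has a nonzero $-\gamma$ component orthogonal to the $\vect{e}_i$. For the second assertion, when $f$ is a developable tube with unit-speed center curve $\gamma$, one has $\nu^C=\gamma'=\vect{e}$, and a direct computation gives $\inner{f^C}{\vect{e}}=0$ using $\inner{\gamma}{\gamma'}=0$ and $\vect{e}_i\perp\vect{e}$. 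This places $f^C$ on the tube of radius $\pi/2$ around $\vect{e}$, identifying $\vect{e}=\gamma'/|\gamma'|$ as a center curve of $f^C$.
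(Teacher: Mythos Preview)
Your argument is correct and follows exactly the route the paper takes: work in the coordinates of Lemma~\ref{lem:sp_coord}, take $\nu^C=\nu_{u_1}/|\nu_{u_1}|$ as unit normal, and read off the front, constant-curvature-$1$, and umbilic-free properties from ${\rm rank}(d\nu^C)\le 1$ together with $\nu^C_{u_1}\neq 0$; for the second part both you and the paper show $\gamma'$ is the unit normal of $f^C$ via $\inner{f^C}{\gamma'}=\inner{(f^C)_{u_j}}{\gamma'}=0$. Your explicit verification of $\nu^C_{u_1}\neq 0$ via $\inner{\nu_{u_1u_1}}{\nu}=-|\nu_{u_1}|^2\neq 0$ supplies the one detail the paper leaves to ``straight-forward calculation.''
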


\begin{proof}
Using the coordinate system as in Lemma \ref{lem:sp_coord},
the first assertion can be proved by a straight-forward calculation.
If $f$ is a developable tube as in \eqref{eq:dev-tube},
we have $\rho=\mu_1(s)x_1 +\cdots+\mu_n(s) x_n$.
Substituting $f$, $\nu=\gamma$ and $\rho$ into \eqref{eq:caustic}, 
we have $\inner{f^C}{\gamma'}=\inner{(f^C)_{u_j}}{\gamma'}=0$ for $j=1,\cdots,n$,
which proves the assertion.
\end{proof}

In \cite[Theorem 6.6]{KRUY1},
Kokubu-Rossman-Umehara-Yamada proved that 
any flat front in $H^3$ is locally the caustic of some flat front.
A similar result is proved as follows.

\begin{corollary}\label{cor:inverse_caustic}
Any umbilic-free constant curvature $1$ front in $S^{n+1}$ 
is locally the caustic of some constant curvature $1$ front.
\end{corollary}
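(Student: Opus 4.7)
The plan is to invert the caustic construction of Proposition~\ref{prop:caus-tube} locally. Let $F: M^n \to S^{n+1}$ be an umbilic-free constant curvature $1$ front and fix $p \in M^n$. The tube-structure portion of the proof of Theorem~\ref{thm:w-complete} is purely local and relies only on Lemma~\ref{lem:sp_coord}, which applies at every point by the umbilic-freeness of $F$. Applying it around $p$, I may choose adapted coordinates so that the unit normal satisfies $\nu(u_1,\ldots,u_n) = \eta(u_1)$ for a regular curve $\eta$ in $S^{n+1}$, and the image of $F$ lies in the tube of radius $\pi/2$ around $\eta$. By Proposition~\ref{prop:caus-tube}, it therefore suffices to produce a regular curve $\gamma$ in $S^{n+1}$ whose unit tangent $\gamma'/|\gamma'|$ traces out $\eta$ near $p$: the developable tube of such $\gamma$ is a constant curvature $1$ front by Proposition~\ref{prop:dev-tube}, and its caustic is the tube of $\eta$, which agrees with $F$ on a neighborhood of $p$.

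To construct $\gamma$, parametrize $\eta$ by arc length, so $|\dot\eta|=1$ and $\eta \cdot \dot\eta = 0$. Writing $\dot\gamma = \lambda\,\eta$ and imposing both $|\gamma|^2 = 1$ and $\gamma \cdot \eta = 0$ (so that $\eta \in T_\gamma S^{n+1}$), differentiation of $\gamma \cdot \eta = 0$ combined with $|\eta|=1$ forces $\lambda = -\gamma \cdot \dot\eta$. I will therefore analyze the linear ODE
\[
  \dot\gamma(t) = -\bigl(\gamma(t) \cdot \dot\eta(t)\bigr)\,\eta(t).
\]
A short check, using $|\eta|=1$, shows that $\gamma \cdot \eta$ is preserved along any solution; consequently $\tfrac{d}{dt}|\gamma|^2 = -2(\gamma \cdot \dot\eta)(\gamma \cdot \eta) = 0$, so $|\gamma|^2 = 1$ is preserved as well. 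I then choose initial data $\gamma(t_0)$ on the unit sphere of $\eta(t_0)^\perp \subset \R^{n+2}$ with $\gamma(t_0) \cdot \dot\eta(t_0) \neq 0$; because $\dot\eta(t_0)$ is a nonzero vector in the $(n+1)$-dimensional space $\eta(t_0)^\perp$, this cuts out an open dense subset of that unit sphere. By continuity, $\lambda$ stays nonzero near $t_0$, so $\gamma$ is a regular curve on $S^{n+1}$ with $\dot\gamma$ parallel to $\eta$.

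After reparametrizing $\gamma$ by arc length, $\gamma'/|\gamma'|$ coincides with $\eta$ up to sign near $p$; either sign yields the same tube of radius $\pi/2$ as a set, so Proposition~\ref{prop:caus-tube} identifies the caustic of the developable tube of $\gamma$ with the tube of $\eta$, which is $F$ on a neighborhood of $p$. The only step requiring any genuine computation is the two-line preservation argument for the ODE; everything else is local existence of a smooth linear ODE plus the structural results on developable tubes already in hand.
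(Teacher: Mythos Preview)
Your argument is correct and follows the same strategy as the paper's proof: reduce to the local tube structure via Lemma~\ref{lem:sp_coord} and the proof of Theorem~\ref{thm:w-complete}, then produce a curve $\gamma$ in $S^{n+1}$ whose unit tangent recovers the center curve $\eta$ of $F$, and invoke Proposition~\ref{prop:caus-tube}. The paper compresses the middle step into the single phrase ``let $\gamma$ be an integral curve of $E(s)$,'' whereas you make this precise by writing down the linear ODE $\dot\gamma=-(\gamma\cdot\dot\eta)\,\eta$ and verifying that $\gamma\cdot\eta$ and $|\gamma|^2$ are conserved; this is a genuine clarification, since $E$ is a curve rather than a vector field on $S^{n+1}$, so the phrase ``integral curve'' does require exactly the unpacking you give.
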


\begin{proof}
Let $f : M^n\rightarrow S^{n+1}$ be an 
umbilic-free constant curvature $1$ front.
By the proof of Theorem \ref{thm:w-complete},
$f$ is a part of developable tube.
If ${E}(s) : \R\supset I \rightarrow S^{n+1}$ gives a center curve of $f$,
let $\gamma(s): I\supset I' \rightarrow S^{n+1}$ be an integral curve of ${E}(s)$.
Then, by Proposition \ref{prop:caus-tube},
the caustic of the developable tube of $\gamma(s)$ coincides with $f$.
\end{proof}

\begin{proof}[Proof of Theorem \ref{thm:caustic}]
By Theorem \ref{thm:w-complete} and Theorem \ref{thm:main},
$f$ is a developable tube as in \eqref{eq:dev-tube}.
Let $\gamma(s): \R \rightarrow S^{n+1}$
be a center curve of $f$.
Without loss of generality,
we may assume $\gamma(s)$
is parametrized by arclength.
By Proposition \ref{prop:caus-tube},
$\vect{e}=\gamma'$ gives a center curve of the caustic $f^C$.
Let $\{ \vect{e}, \vect{e}^C:=\vect{e}'/|\vect{e}'|, \vect{e}_1^C,\cdots,\vect{e}_n^C \}$ 
be the Bishop frame of $\vect{e}$ such that 
$(\vect{e}_j^C)'=-\mu^C_j \vect{e}^C$ for $j=1,\cdots,n$.
Then the caustic is given by
$
  f^C(s,\vect{x})=x_1\vect{e}_1^C+\cdots+x_n\vect{e}_n^C
$
with the unit normal $\nu^C=\vect{e}$,
where $\vect{x}=(x_1,\cdots,x_n)\in S^{n-1}$.
If $\kappa_{\gamma}(s)$ is the curvature function of $\gamma(s)$,
$1+\kappa_{\gamma}^2=|\vect{e}'|^2$ holds.
The lift metric $ds^2_{\#,C}=\inner{df^C}{df^C}+\inner{d\nu^C}{d\nu^C}$ 
of $f^C$ is calculated as
\[
  ds^2_{\#,C}
  =\left( 1+\kappa_{\gamma}^2+ \left(\sum_{j=1}^n x_j\mu_j^C\right)^2 \right)ds^2
       +g_{S^{n-1}}
  \geq ds^2+g_{S^{n-1}},
\]
which proves the first assertion.
If $f$ is complete (resp.\ non-co-orientable),
by Proposition \ref{prop:dev-tube},
$\gamma=\gamma(s)$ is closed (resp.\ antiperiodic).
Then, $\vect{e}=\vect{e}(s)$ is also closed (resp.\ antiperiodic),
and hence the caustic $f^C$ is also complete (resp.\ non-co-orientable)
by Proposition \ref{prop:dev-tube}.
\end{proof}

\section{The case of $n=2$}
\label{sec:transformation}

In this section, 
we stick to the case of $n=2$ and investigate their duals.

\subsection{Comparison with flat fronts in $\R^3$}
In \cite{MU},
a front in $\R^3$ is called {\it flat}, 
if its (locally defined) unit normal vector field
$\nu$ degenerates everywhere.
By Lemma \ref{lem:degenerate-nu},
we have that this flatness coincides 
with our definition of flatness (cf.\ Section \ref{sec:CTB}).
Murata-Umehara \cite{MU} proved the following.

\begin{fact}[\cite{MU}]\label{fact:MU}
Let $M^2$ be a connected smooth $2$-manifold
and $f : M^2\rightarrow \R^3$ be 
a complete flat front in $\R^3$
whose singular point set is non-empty.
Then, $f$ is 
\begin{itemize}
\item umbilic-free, co-orientable and
\item orientable, more precisely $M^2$ is diffeomorphic to a circular cylinder.
\end{itemize}
Moreover, if the ends of $f$ are embedded, 
$f$ has at least four singular points 
other than cuspidal edges.
\end{fact}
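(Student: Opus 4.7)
The plan is to parallel the argument for constant curvature $1$ fronts in $S^{n+1}$ (Theorem \ref{thm:w-complete} and Theorem \ref{thm:main}), with great circles replaced by straight lines. By Lemma \ref{lem:degenerate-nu}, flatness is equivalent to $\mathrm{rank}(d\nu)\le 1$, and Lemma \ref{lem:singular-umb} still characterises singular umbilic points as those with $df=0$. Around a non-umbilic point I would choose coordinates $(u_1,u_2)$ exactly as in Lemma \ref{lem:sp_coord}, so that $\nu_{u_2}=0$, $f_{u_1}=-\rho\,\nu_{u_1}$, and $\inner{\nu_{u_1}}{f_{u_2}}=0$; the $u_2$-curves are then ambient geodesics, i.e., straight lines. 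This puts $f$ in the standard ruled developable form
\[
  f(u_1,u_2)=c(u_1)+u_2\,\vec{\ell}(u_1),\qquad |\vec{\ell}|=1,
\]
with $\nu(u_1,u_2)=\gamma(u_1)$ a regular spherical curve.

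For umbilic-freeness, I would compute the flat analogue of Lemma \ref{lem:Massey}; the vanishing ambient curvature gives $\rho_{u_2u_2}=0$, so $\rho$ is affine along rulings. If the umbilic set $\mathcal{U}_f$ had non-empty interior, $\nu$ would be constant on an open set, making $f$ planar there; the ruled-developable extension across the non-umbilic boundary strips would then propagate planarity globally, contradicting nonemptiness of the singular set. If instead a boundary umbilic $q\in\partial\mathcal{U}_f$ existed, I would approach it by non-umbilic points $p_m$ on a common ruling and combine the affine formula $\rho(t)=at+b$ with $\Lambda=[1:\rho]\to[0:1]$ to contradict regularity of $\gamma$ near $q$, in the spirit of the proof of Theorem \ref{thm:w-complete}.

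Once $\mathcal{U}_f=\emptyset$, the ruled developable representation is global and $\nu=\gamma$ is a regular curve in $S^2$. Completeness of the lift metric combined with Lemma \ref{lem:sing_compact} (applied to the nonempty $S_f$) forces $\gamma$ to be closed, since otherwise the singular set would escape any compact subset of $M^2$. The topology is therefore $S^1\times\R$, a circular cylinder. Orientability follows by the Bishop-frame argument at the end of the proof of Theorem \ref{thm:main}: a sign flip of the frame after one period would force $\gamma'$ to vanish, contradicting regularity of $\gamma$; co-orientability then follows from the globally defined $\nu=\gamma$.

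For the final bound on non-cuspidal-edge singular points, $f(u_1,u_2)=c(u_1)+u_2\vec{\ell}(u_1)$ with closed spherical $\gamma=\nu$. Singular points occur where $c'\parallel\vec{\ell}$, and singularities other than cuspidal edges correspond to vertices of $\gamma$ on $S^2$. The embedded-end assumption says $\gamma$ bounds a topologically simple region in $S^2$, so a four-vertex theorem for closed spherical curves yields at least four vertices and hence at least four non-cuspidal-edge singularities of $f$. The main obstacle is precisely this final step: translating the analytic degeneracies of the frame $(c,\vec{\ell})$ into the geometric singularity hierarchy (cuspidal edge, swallowtail, $\ldots$) and invoking a spherical four-vertex-type theorem that applies without a convexity hypothesis on $\gamma$.
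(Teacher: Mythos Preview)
The paper does not prove this statement: it is stated as a Fact, attributed to \cite{MU}, with no argument supplied. There is therefore nothing in the present paper to compare your proposal against.

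Your plan of transporting the $S^{n+1}$ arguments (Lemmas~\ref{lem:sp_coord}--\ref{lem:Massey}, Theorem~\ref{thm:w-complete}) to $\R^3$ is reasonable in spirit, since those arguments are themselves modelled on \cite{MU}. Two genuine gaps remain in your sketch, however. First, the co-orientability step is circular: you assert that the ruled representation is global and that $\nu=\gamma$ is globally defined, and then deduce co-orientability---but a global $\nu$ \emph{is} co-orientability. You need to explain what rules out the antiperiodic mechanism that produces non-co-orientable examples in $S^3$ (Example~\ref{ex:non-co-ori}); your outline does not do this, and the Bishop-frame orientation argument you borrow from the proof of Theorem~\ref{thm:main} addresses orientability of $M^n$, not co-orientability of $\nu$. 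Second, you correctly flag the four-vertex step as the main obstacle: establishing the precise correspondence between non-cuspidal-edge singularities of $f$ and vertices of the spherical curve $\gamma$, and then invoking an appropriate four-vertex theorem without a convexity hypothesis, is the substantive content of that portion of \cite{MU} and cannot be read off from the present paper.
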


Let $f : M^2\rightarrow S^3$ be a constant curvature $1$ front.
If $f$ is an immersion, the Gaussian curvature $K_{\rm int}$ 
of the induced metric is identically $1$.
By the Gauss equation $K_{\rm int}=K_{\rm ext}+1$,
the extrinsic curvature $K_{\rm ext}$ vanishes on $M^2$.
Therefore, we also call a constant curvature $1$ front
{\it extrinsically flat} (or {\it e-flat}).
By Lemma \ref{lem:degenerate-nu},
a front is e-flat if and only if 
its (locally defined) unit normal vector field
$\nu$ degenerates everywhere.
It is known that any flat immersed surface in $\R^3$
is regarded as an e-flat one in $S^3$
via a central projection (cf.\ Example \ref{ex:dev-Moebius})
and vise versa.
That is, their local properties are the same.
However, they may have different global properties.
For example, 
there are non-totally-geodesic complete flat immersions in $\R^3$ 
(cylinders over plane curves \cite{HN}),
while a complete e-flat immersion in $S^3$ must be totally geodesic \cite{OS}.

Similarly, in the case of fronts, 
the global properties of e-flat fronts in $S^3$
are not necessarily equal to those of flat fronts in $\R^3$.
For example,
although complete flat fronts in $\R^3$ with non-empty singular set
must be co-orientable by Fact \ref{fact:MU},
we can not deduce the co-orientability of complete e-flat fronts in $S^3$.
(Theorem \ref{thm:main} just implies their orientability.) 
In fact, there exist compact non-co-orientable e-flat fronts in $S^3$
as follows.

\begin{example}[Non-co-orientable e-flat front]\label{ex:non-co-ori}
Let $f: \R \times S^1 \rightarrow S^3$ be a developable tube 
as in \eqref{eq:dev-tube}.
Since $(x_1)^2+(x_2)^2=1$, we may put $x_1=\cos t$, $x_2=\sin t$.
And hence, we have the following parametrization:
\begin{equation}\label{eq:dev-tube-2}
  f(s,t)=(\cos t) \vect{e}_1(s)+(\sin t) \vect{e}_2(s).
\end{equation}
We call a regular curve in $S^3$ 
whose curvature and torsion are constant {\it helix}.
\begin{itemize}
\item
Figure \ref{fig:gallery} (B) shows 
a graphic of a developable tube $f(s,t)$ 
whose center curve is a helix $\gamma(s)$ with $(\kappa,\tau)=(3/4,5/4)$.
In this case, $f$ is co-orientable and compact.
\item
Figure \ref{fig:non-co-ori} shows 
a graphic of a developable tube $f(s,t)$ 
whose center curve is a helix $\gamma(s)$ with $(\kappa,\tau)=(4/3,5/3)$.
In this case, $\gamma(s)$ is antiperiodic, 
and hence $f$ is non-co-orientable by Proposition \ref{prop:dev-tube}.
\end{itemize}
In each case, the restriction of $f$ to the regular point set is an embedding.
By this example, we can not expect any evaluation of 
the number of singular points other than cuspidal edges 
as in the case of complete flat fronts in $\R^3$ (cf.\ Fact \cite{MU}).
\end{example}

\begin{figure}[htb]
\begin{center}
 \begin{tabular}{{c@{\hspace{8mm}}c@{\hspace{8mm}}c}}
  \resizebox{3cm}{!}{\includegraphics{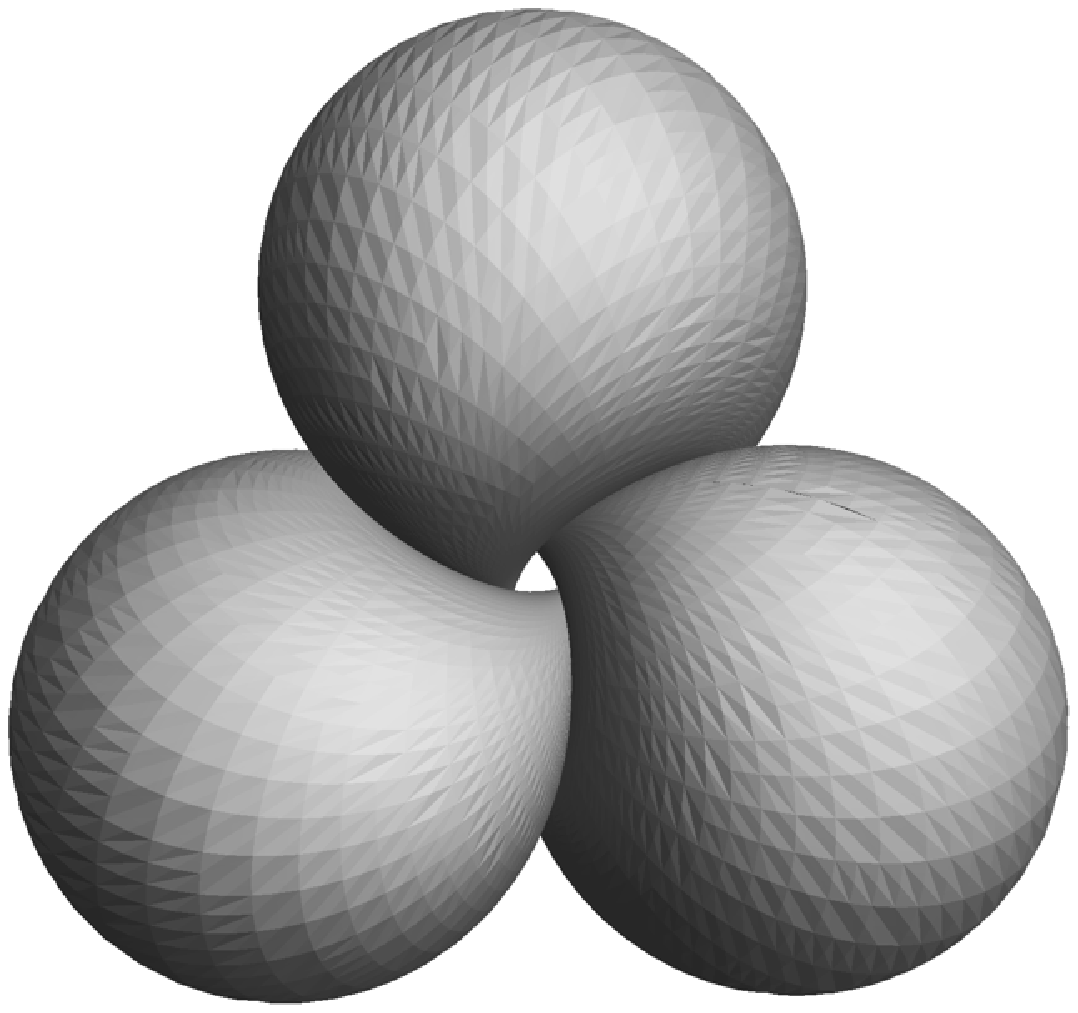}}&
  \resizebox{3cm}{!}{\includegraphics{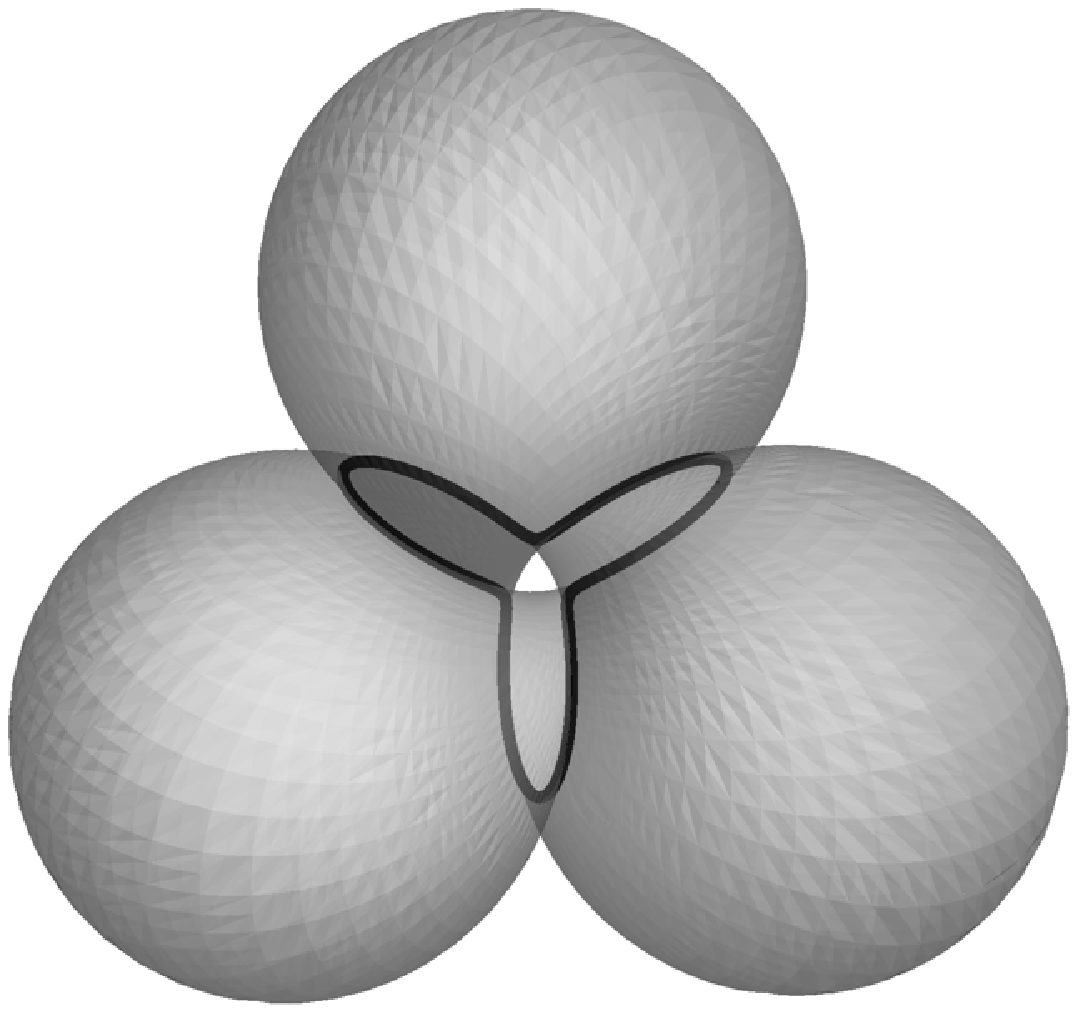}}&
  \resizebox{3cm}{!}{\includegraphics{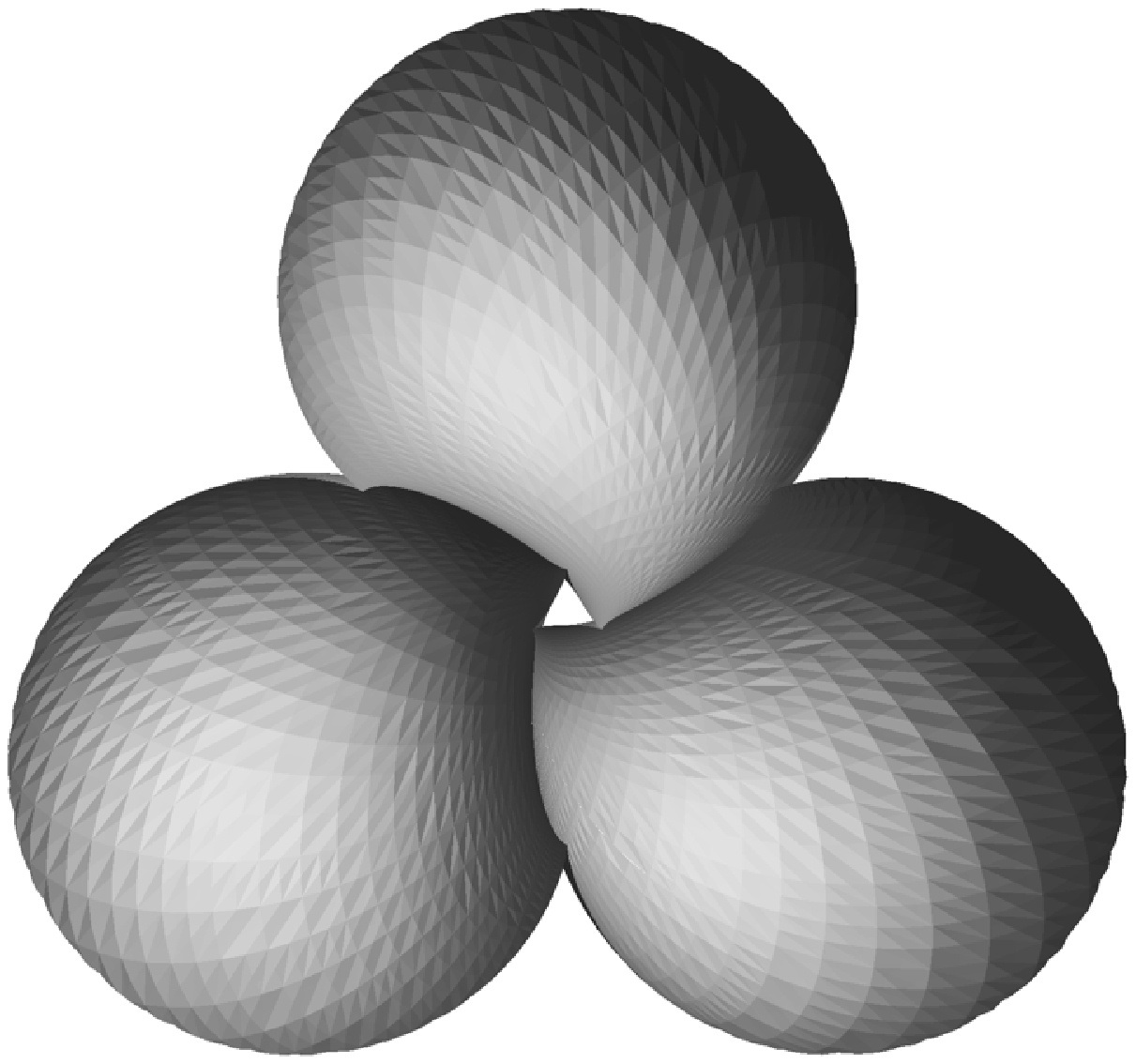}} \\
  {\footnotesize  (A) The developable tube.} &
  {\footnotesize  (B) The transparency.} &
  {\footnotesize  (C) The half cut.}
 \end{tabular}
 \caption{A non-co-orientable compact e-flat front in $S^3$,
   which is given by a developable tube whose center curve is a helix
   with $(\kappa,\tau)=(4/3,5/3)$ (cf.\ Example \ref{ex:non-co-ori}).
   The bold curve is the image of the singular set which is also a helix.
 }
 \label{fig:non-co-ori}
\end{center}
\end{figure}

It is known that for constants $a$, $b$, $\phi$ 
satisfying $a^2 \cos ^2 \phi + b^2 \sin ^2 \phi =1$,
\[
  \gamma(s)=\left(
    \cos \phi \cos as,\, \cos \phi \sin as,\, \sin \phi \cos bs,\, \sin \phi \sin bs
  \right)
\]
gives the helix with $(\kappa,\tau)=(\sqrt{(a^2-1)(1-b^2)}, ab)$
(cf.\ \cite{Tamura}).

\begin{example}[Non-orientable e-flat immersion]
\label{ex:dev-Moebius}
The central projection $\pi_{\rm cent} : S^3_+ \rightarrow \R^3 $
is a diffeomorphism defined by
\[
  \pi_{\rm cent}(x_1,x_2,x_3,x_4)=\frac{1}{x_4}(x_1,x_2,x_3),
\]
where $S^3_+$ is the hemisphere 
$S^3_+=\{ (x_1,x_2,x_3,x_4) \in S^3\,;\, x_4>0 \}$.
Then the {\it Klein model} of $S^3$ is  
$\R^3$ equipped with the metric induced by $(\pi_{\rm cent})^{-1}$.
It is known that if $f : M^2 \rightarrow \R^3$ is a flat immersion, 
$(\pi_{\rm cent})^{-1}\circ f : M^2 \rightarrow S^3$
gives an e-flat immersion.
Since there exist non-orientable flat surfaces in $\R^3$ 
(cf.\ \cite{W,KroUme,Naokawa}),
there are non-orientable e-flat ones in $S^3$ through the Klein model.
E-flat M\"obius strips are studied in \cite{Naokawa2}.
\end{example}

\subsection{Dual}
\label{sec:dual}

Here we investigate duals of e-flat fronts in $S^3$.
We first define the dual of a great circle in $S^3$.

Let $\ell = \ell(t)$ be a great circle in $S^3$.
There exists a linear $2$-subspace $\Pi_\ell$ of $\R^4$
such that $\Pi_\ell \cap S^3$ coincides with the image of $\ell$.
Then we call a great circle $\ell^\perp$ the {\it dual} of $\ell$,
if the image of $\ell^\perp$ coincides with $\Pi_\ell^{\perp} \cap S^3$,
where $\Pi_\ell^{\perp}$ is the orthogonal complement of $\Pi_\ell$ in $\R^4$.
For a great circle $\ell(t)=(\cos t)p+(\sin t)\vect{v}$ $(p\in S^3,\, \vect{v}\in T_pS^3)$,
take an orthonormal basis $\{p,\vect{v}, q, \vect{w}\}$ of $\R^4$.
Then, the dual great circle $\ell^\perp$ of $\ell$ is parametrized as
$
  \ell^\perp(t)=(\cos t)q+(\sin t)\vect{w}.
$

For an open interval $I\subset\R$,
let $f: I\times S^1 \rightarrow S^3$ be a ruled surface
$
  f(s,t)=(\cos t)p(s)+(\sin t)\vect{v}(s)
$
given by a curve $(p,\vect{v}) : I\rightarrow T_1S^3$.
As in the case of great circles, 
we may define the {\it dual} $f^\#$ as
$
  f^\#(s,t)=(\cos t)q(s)+(\sin t)\vect{w}(s),
$
where $(p,\vect{v}, q, \vect{w}) : I \rightarrow \SO(4)$ 
is an orthonormal frame.

\begin{lemma}\label{lem:dual-tube}
Let $f$ be a developable tube defined by \eqref{eq:dev-tube-2}
whose center curve is $\gamma(s)$.
Then, the binormal vector field $\vect{b}(s)$
gives the center curve of the dual $f^{\#}$.
\end{lemma}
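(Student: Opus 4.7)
The plan is to compute the dual directly from its definition and then recognize the resulting ruled surface as a developable tube around $\vect{b}(s)$ by means of the Frenet--Serret formulas for curves in $S^3$.

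First, I would rewrite $f(s,t)=(\cos t)\vect{e}_1(s)+(\sin t)\vect{e}_2(s)$ in the form $(\cos t)p(s)+(\sin t)\vect{v}(s)$ with $p=\vect{e}_1$ and $\vect{v}=\vect{e}_2$. Since $\{\vect{e}_1(s),\vect{e}_2(s)\}$ is an orthonormal frame of the normal bundle of $\gamma$ inside $\R^4$, and $\gamma$ is arclength-parametrized so that $\gamma(s)\perp\gamma'(s)$, the quadruple $\{\gamma(s),\gamma'(s),\vect{e}_1(s),\vect{e}_2(s)\}$ is an orthonormal basis of $\R^4$ at each $s$. Hence, after a possible sign flip (which does not alter the image of $f^{\#}$), I may choose $q(s)=\gamma(s)$ and $\vect{w}(s)=\gamma'(s)$ so that $(p,\vect{v},q,\vect{w})\in\SO(4)$, giving the explicit formula $f^{\#}(s,t)=(\cos t)\gamma(s)+(\sin t)\gamma'(s)$.

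Next, I would invoke the Frenet--Serret formulas for the arclength-parametrized curve $\gamma$ in $S^3$, with Frenet frame $\{\gamma,\vect{t}=\gamma',\vect{n},\vect{b}\}$, curvature $\kappa$, and torsion $\tau$. A short computation using $\inner{\vect{b}}{\gamma}=\inner{\vect{b}}{\vect{t}}=\inner{\vect{b}}{\vect{n}}=0$ and differentiating yields $\vect{b}'(s)=-\tau(s)\vect{n}(s)$. Assuming $\tau$ is nowhere zero so that $\vect{b}$ is a regular curve in $S^3$, the tangent line to $\vect{b}$ at $\vect{b}(s)$ is spanned by $\vect{n}(s)$; consequently the normal $2$-plane to $\vect{b}$ at $\vect{b}(s)$, namely the orthogonal complement in $\R^4$ of the span of $\{\vect{b}(s),\vect{b}'(s)\}=\{\vect{b}(s),\vect{n}(s)\}$, coincides with the plane spanned by $\{\gamma(s),\gamma'(s)\}$. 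Taking $\{\gamma(s),\gamma'(s)\}$ as the orthonormal frame of the normal bundle of $\vect{b}$ in the definition \eqref{eq:dev-tube}, the developable tube of $\vect{b}$ is precisely $(\cos t)\gamma(s)+(\sin t)\gamma'(s)=f^{\#}(s,t)$, proving the claim.

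The main subtlety is the assumption that $\vect{b}$ is a regular curve: this requires the torsion $\tau$ of $\gamma$ to be nowhere zero, and points where $\tau$ vanishes would need separate treatment (e.g., by taking limits or working with the Bishop frame used in the earlier sections). A secondary bookkeeping point is the $\SO(4)$ orientation condition in the definition of the dual, which is resolved by the sign flip on $\vect{w}$ noted above and has no effect on the geometric conclusion.
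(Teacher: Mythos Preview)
Your first step---deriving $f^{\#}(s,t)=(\cos t)\gamma(s)+(\sin t)\gamma'(s)$ from the orthonormal frame $\{\gamma,\gamma',\vect{e}_1,\vect{e}_2\}$---is exactly what the paper does. The divergence is in the second step. You argue that $\{\gamma,\gamma'\}$ spans the normal plane of $\vect{b}$ and hence $f^{\#}$ matches the developable-tube formula \eqref{eq:dev-tube} applied to $\vect{b}$; this forces you to assume $\tau\neq 0$ so that $\vect{b}$ is regular, a hypothesis you correctly flag as problematic. The paper instead recalls (from the proof of Proposition~\ref{prop:dev-tube}) that the center curve of a developable tube is precisely its unit normal, and then simply verifies
\[
\inner{f^{\#}}{\vect{b}}=\inner{(f^{\#})_s}{\vect{b}}=\inner{(f^{\#})_t}{\vect{b}}=0,
\]
so that $\vect{b}$ is the unit normal of $f^{\#}$. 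This sidesteps the regularity of $\vect{b}$ entirely: the orthogonality check makes sense whether or not $\vect{b}'$ vanishes, and indeed Remark~\ref{rem:INS} immediately afterward notes that when $\tau(s)=0$ the dual fails to be a front. So your argument is sound where $\tau\neq 0$, but the paper's route via the unit-normal characterization is both shorter and uniformly valid, which is why no ``separate treatment'' is needed there.
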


\begin{proof}
It suffices to show that $\vect{b}(s)$ gives the unit normal of $f^{\#}$.
Since $\mathcal{F}=\{\gamma, \vect{e}=\gamma', \vect{e}_1, \vect{e}_2\}$
is an orthonormal frame, the dual $f^{\#}$ is given by
\[
  f^{\#}(s,t)=(\cos t)\gamma(s)+(\sin t)\gamma'(s).
\]
Thus 
$\inner{f^{\#}}{\vect{b}}=\inner{(f^{\#})_s}{\vect{b}}=\inner{(f^{\#})_t}{\vect{b}}=0$
hold, which proves the assertion.
\end{proof}

\begin{remark}\label{rem:INS}
In \cite[Section 7]{INS},
Izumiya-Nagai-Saji studied the dual 
of a ruled e-flat (i.e., developable) surface.
We can check that our definition of duals coincides with 
the definition given by Izumiya-Nagai-Saji \cite{INS}.
We also remark that,
since $\vect{b}'(s)=0$ at a point $\tau(s)=0$
(cf.\ \eqref{eq:Frenet-Serret}),
{\it duals of developable tube fronts are not necessarily fronts}
(see Figure \ref{fig:MU}),
where $\tau(s)$ is the torsion function of $\gamma(s)$.
\end{remark}

\begin{figure}[htb]
\begin{center}
 \begin{tabular}{{c@{\hspace{20mm}}c}}
  \resizebox{2.5cm}{!}{\includegraphics{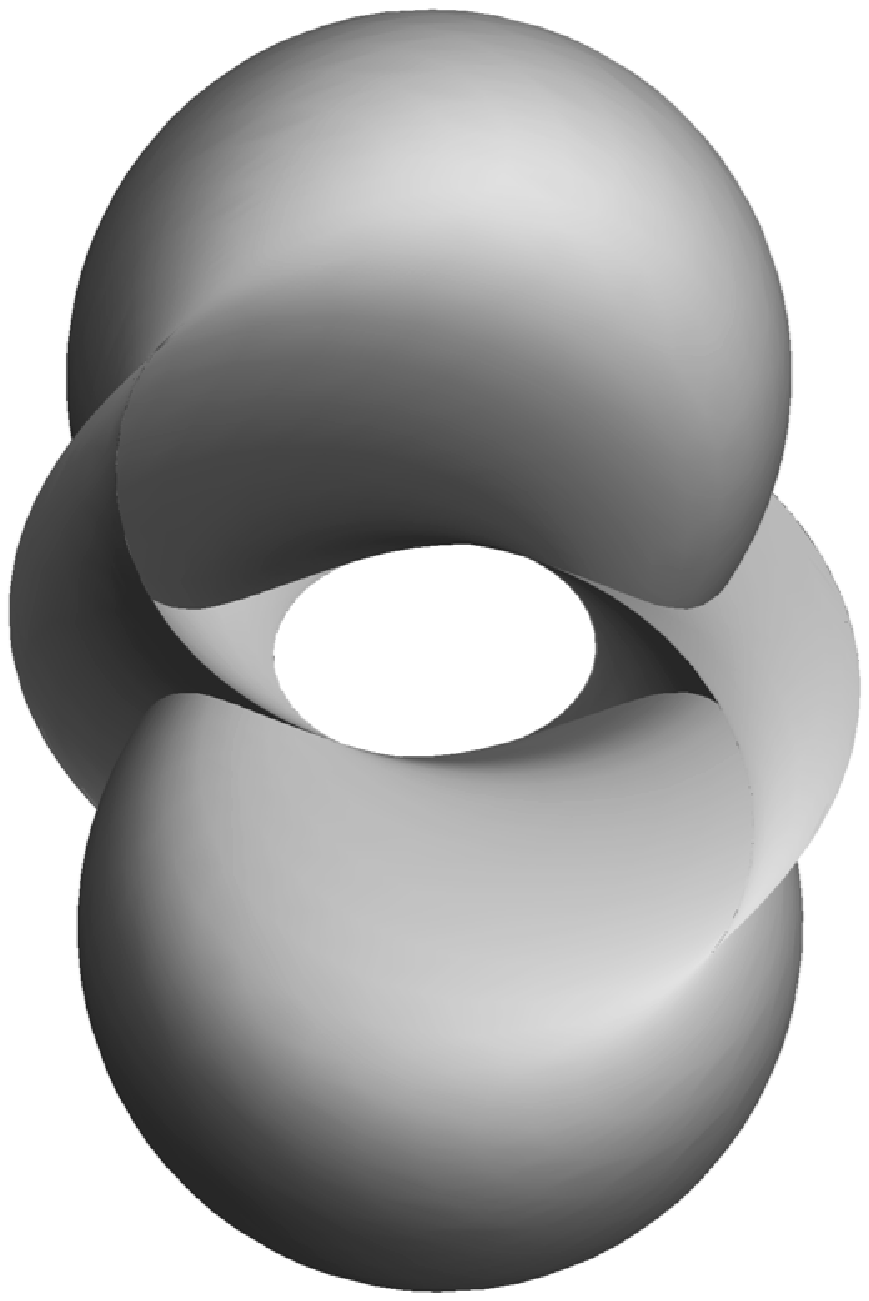}}&
  \resizebox{3.5cm}{!}{\includegraphics{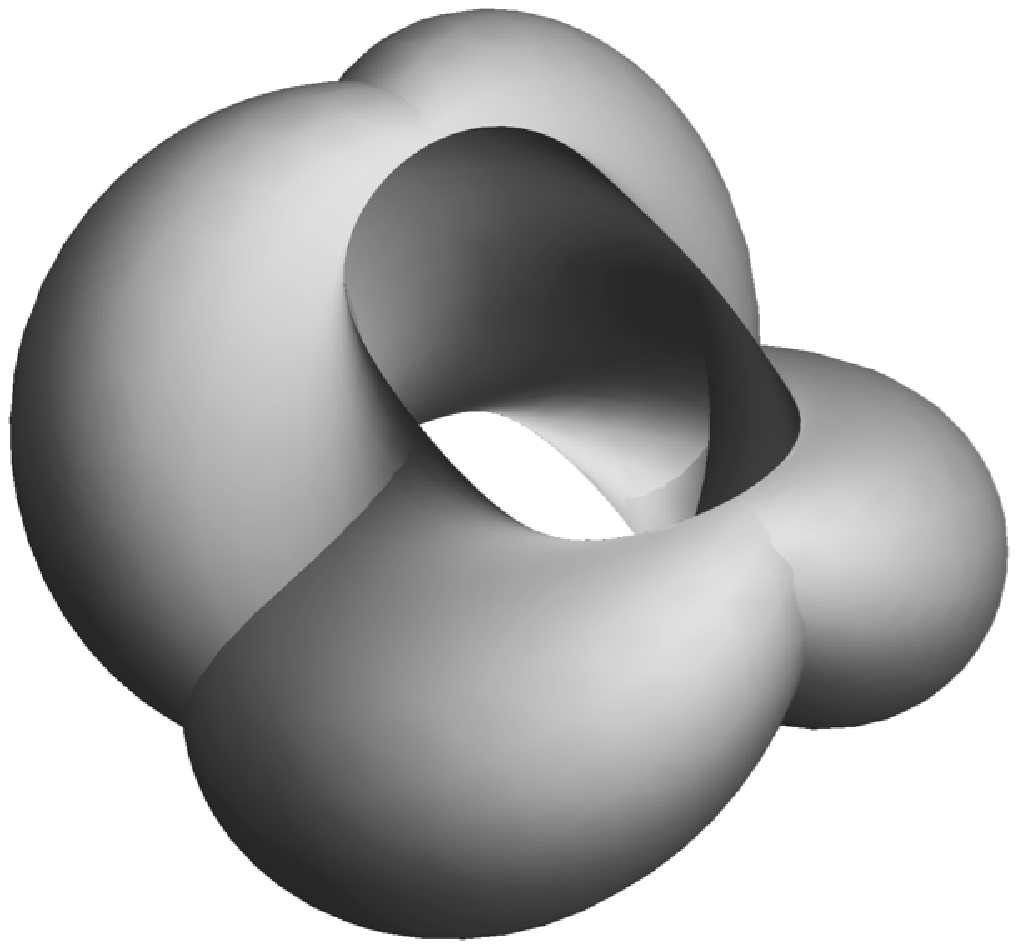}} \\
  {\footnotesize  A compact e-flat front.} &
  {\footnotesize  The dual.}
 \end{tabular}
 \caption{A compact e-flat front in $S^3$ with its dual.
 The dual has cuspidal cross cap singularities
 and hence it is not a front.
 In general, if a developable tube has swallowtails,
 the dual has cuspidal cross caps.
 Such a phenomena is called 
 {\it the duality of singularities}
 \cite[Section 7]{INS}.
 }
 \label{fig:MU}
\end{center}
\end{figure}

Now, we consider the fixed point set of the dual operation.
For a regular curve $\gamma(s)$ in $S^3$, 
the following is called the {\it Frenet-Serret} formula: 
\begin{equation}\label{eq:Frenet-Serret}
  \vect{e}'=-\gamma+\kappa\vect{n},\qquad
  \vect{n}'=-\kappa\vect{e}+\tau\vect{b}, \qquad
  \vect{b}'=-\tau\vect{n},
\end{equation}
where $\kappa=\kappa(s)$ and $\vect{e}=\vect{e}(s)$ 
are the curvature function and
the velocity vector field $\vect{e}(s)=\gamma'(s)$ 
of $\gamma$, respectively.

\begin{lemma}\label{lem:binormal-congruence}
Let $\gamma=\gamma(s) : \R\supset I\rightarrow S^3$
be a regular curve parametrized by arclength. Then
$\gamma(s)$ is congruent to $\vect{b}(s)$
if and only if $|\tau(s)|$ is identically $1$.
\end{lemma}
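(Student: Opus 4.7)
The plan is to use the Frenet--Serret formulas \eqref{eq:Frenet-Serret} to compute the speed $|\vect{b}'(s)|=|\tau(s)|$, from which both implications of the equivalence will follow.

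For the ``only if'' direction, I interpret congruence as the existence of an isometry $\Phi$ of $S^{3}$ with $\Phi\circ\gamma=\vect{b}$. Differentiating this identity yields $d\Phi(\vect{e}(s))=\vect{b}'(s)=-\tau(s)\vect{n}(s)$; since $\Phi$ preserves the Riemannian metric and $|\vect{e}(s)|=1$, the left-hand side has unit norm, while the right-hand side has norm $|\tau(s)|$. Hence $|\tau|\equiv 1$.

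For the ``if'' direction, assume $|\tau|\equiv 1$, so by continuity $\tau\equiv\epsilon$ for some constant $\epsilon\in\{+1,-1\}$. Since $|\vect{b}'|=1$, the curve $\vect{b}(s)$ is parametrized by arclength, and I compute its Frenet frame $\{\vect{b},\tilde{\vect{e}},\tilde{\vect{n}},\tilde{\vect{b}}\}$ and invariants $(\tilde\kappa,\tilde\tau)$ directly from \eqref{eq:Frenet-Serret}. Using $\epsilon^{2}=1$, one finds $\tilde{\vect{e}}=-\epsilon\vect{n}$ and $\tilde{\vect{e}}'=\epsilon\kappa\vect{e}-\vect{b}$; comparison with the Frenet equation $\tilde{\vect{e}}'=-\vect{b}+\tilde\kappa\tilde{\vect{n}}$ yields $\tilde\kappa=\kappa$ and $\tilde{\vect{n}}=\epsilon\vect{e}$. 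Imposing positive orientation on $\{\vect{b},-\epsilon\vect{n},\epsilon\vect{e},\tilde{\vect{b}}\}$ then forces $\tilde{\vect{b}}=-\gamma$, and finally $\tilde{\vect{b}}'=-\vect{e}$ combined with $\tilde{\vect{b}}'=-\tilde\tau\tilde{\vect{n}}$ gives $\tilde\tau=\epsilon=\tau$. Thus $\vect{b}$ and $\gamma$ share the same curvature and torsion, so the fundamental theorem for regular curves in $S^{3}$ produces an isometry $\Phi$ of $S^{3}$ with $\Phi\circ\gamma=\vect{b}$.

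The main (and only mildly subtle) point is the orientation bookkeeping that pins down the sign of $\tilde{\vect{b}}$; once that is settled, the remainder is a direct application of \eqref{eq:Frenet-Serret} together with the classical congruence theorem for curves in $S^{3}$.
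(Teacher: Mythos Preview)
Your proof is correct and follows essentially the same strategy as the paper: both directions hinge on $\vect{b}'=-\tau\vect{n}$ to identify $|\vect{b}'|=|\tau|$, and the ``if'' direction proceeds by computing the Frenet invariants of $\vect{b}$ and invoking the fundamental theorem for curves in $S^{3}$. The only cosmetic difference is that the paper reduces to $\tau\equiv 1$ by a without-loss-of-generality remark, whereas you carry the sign $\epsilon\in\{\pm 1\}$ through the orientation bookkeeping explicitly; both are fine, and your treatment is arguably cleaner.
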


\begin{proof}
If $\gamma(s)$ is congruent to $\vect{b}(s)$,
$|\vect{b}'(s)|^2=|\gamma'(s)|^2=1$ holds.
On the other hand,
by the Frenet-Serret formula, we have
$|\vect{b}'(s)|^2=|\tau(s)|^2$.
Thus, we have $|\tau(s)|=1$ for each $s\in I$.
We shall prove the converse.
Without loss of generality, 
we may assume that $\tau(s)=1$ for all $s\in I$.
By the Frenet-Serret formula, we have
$\vect{b}'(s)=-\vect{n}(s)$ and 
$
  \vect{b}''(s)=-\vect{b}(s)+\kappa(s)\vect{e}(s).
$
Thus, the curvature and torsion of $\vect{b}(s)$
are given by $\kappa(s)$, $1$ respectively.
Therefore, $\gamma(s)$ is congruent to $\vect{b}(s)$.
\end{proof}

If a ruled surface is congruent to its dual,
we call it {\it self-dual}. 
By Lemma \ref{lem:dual-tube} and Lemma \ref{lem:binormal-congruence},
we have the following.

\begin{theorem}\label{prop:self-dual}
Let $f$ be a developable tube 
of a regular curve $\gamma=\gamma(s)$.
Then $f$ is self-dual if and only if $|\tau(s)|=1$.
\end{theorem}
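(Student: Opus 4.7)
The plan is to combine the two preparatory lemmas, Lemma \ref{lem:dual-tube} and Lemma \ref{lem:binormal-congruence}, after first reducing the self-duality condition on tubes to a congruence condition on their center curves.

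First I would observe that the developable tube in \eqref{eq:dev-tube-2} is, as a subset of $S^3$, uniquely determined by its center curve $\gamma(s)$. Indeed, any two orthonormal frames $\{\vect{e}_1,\vect{e}_2\}$ of the normal bundle $(\gamma')^{\perp}$ differ fiberwise by an element of $\SO(2)$, which amounts to a reparametrization in the $t$ variable and does not change the image. Therefore, if $\gamma$ and $\tilde{\gamma}$ are two regular curves in $S^3$, the associated developable tubes $f_{\gamma}$ and $f_{\tilde{\gamma}}$ are congruent in $S^3$ if and only if $\gamma$ and $\tilde{\gamma}$ are congruent in $S^3$.

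Next I would apply Lemma \ref{lem:dual-tube}, which identifies $f^{\#}$ as a developable tube whose center curve is the binormal field $\vect{b}(s)$ of $\gamma$. Combined with the previous observation, this yields that $f$ is self-dual, i.e., $f$ is congruent to $f^{\#}$ in $S^3$, if and only if $\gamma(s)$ is congruent to $\vect{b}(s)$ in $S^3$. Finally, Lemma \ref{lem:binormal-congruence} converts this last condition into the analytic statement $|\tau(s)|\equiv 1$, completing the proof.

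There is not really a hard obstacle here, since the theorem is essentially a clean corollary of the two preceding lemmas. The only nontrivial point to articulate clearly is the first one, namely that the image of a developable tube of radius $\pi/2$ depends on the center curve only up to an overall isometry of $S^3$; once that is on the table the chain of equivalences
\[
  f \text{ self-dual} \ \Longleftrightarrow\ \gamma \cong \vect{b} \ \Longleftrightarrow\ |\tau(s)|\equiv 1
\]
is immediate.
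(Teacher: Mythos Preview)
Your proposal is correct and follows essentially the same approach as the paper, which simply records that the theorem is an immediate consequence of Lemma~\ref{lem:dual-tube} and Lemma~\ref{lem:binormal-congruence}. You have merely made explicit the intermediate step---that two developable tubes are congruent if and only if their center curves are---which the paper leaves to the reader.
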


\begin{acknowledgements}
The author would like to thank 
Professors Masaaki Umehara, Kotaro Yamada,
Miyuki Koiso, Masatoshi Kokubu, Jun-ichi Inoguchi, Yu Kawakami and Kosuke Naokawa
for their valuable comments and constant encouragements.
He also expresses gratitude to Professor Udo Hertrich-Jeromin and Gudrun Szewieczek
for careful reading of the first draft.
This work is partially supported by Grant-in-Aid for Challenging Exploratory 
Research No.\ 26610016 of the Japan Society for the Promotion of Science.
\end{acknowledgements}


\end{document}